\newcommand{\IQ}{\mathbb{Q}}
\newcommand{\IZ}{\mathbb{Z}}
\newcommand{\IC}{\mathbb{C}}
\newcommand{\IR}{\mathbb{R}}
\newcommand{\IA}{\mathbb{A}}
\renewcommand{\Re}{\mathrm{Re}}
\newcommand{\Borel}{B}
\newcommand{\Siegel}{P}
\newcommand{\Klingen}{Q}
\DeclareMathOperator{\St}{St}
\DeclareMathOperator{\GL}{GL}
\DeclareMathOperator{\GSp}{GSp}
\DeclareMathOperator{\GSO}{GSO}
\DeclareMathOperator{\GO}{GO}
\DeclareMathOperator{\Sp}{Sp}
\DeclareMathOperator{\simi}{sim}
\newtheorem{theorem}{Theorem}[section]
\newtheorem{lemma}[theorem]{Lemma}
\newtheorem{proposition}[theorem]{Proposition}
\newtheorem{hypothesis}[theorem]{Hypothesis}
\begin{document}
\title{
\large{Multiplicity one for certain paramodular forms of genus two}
}

\date{}
\author{\large{Mirko R\"osner} \and \large{Rainer Weissauer}}

\maketitle

\begin{abstract} 
We show that certain paramodular cuspidal automorphic irreducible representations of $\GSp(4,\IA_\IQ)$, which are not CAP, are globally generic. 
This implies a multiplicity one theorem for paramodular cuspidal automorphic representations.
Our proof relies on a reasonable hypothesis concerning the non-vanishing of central values of automorphic $L$-series.
\let\thefootnote\relax\footnote{2010 \textit{Mathematics Subject Classification.} Primary 11F70; Secondary 11F55, 11F67.}
\let\thefootnote\relax\footnote{\textit{Key words and phrases.} Automorphic representations, paramodular, multiplicity one, strong multiplicity one, spinor L-function, symplectic similitudes.}
\end{abstract}

\section{Introduction}
Atkin-Lehner theory defines a one-to-one correspondence between cuspidal automorphic irreducible representations of $\GL(2,\IA_\IQ)$ with archimedean factor in the discrete series and normalized holomorphic elliptic cuspidal newforms on the upper half plane, that are eigenforms for the Hecke algebra.
As an analogue for the symplectic group $\GSp(4,\IA_\IQ)$, a local theory of newforms has been developed by Roberts and Schmidt \cite{Roberts-Schmidt} with respect to paramodular groups. 

However, still lacking for this theory is the information whether paramodular cuspidal automorphic irreducible representations of $\GSp(4,\IA_\IQ)$ occur in the cuspidal spectrum with multiplicity one. Furthermore, holomorphic paramodular cusp forms, i.e.\ those invariant under some paramodular subgroup of $\Sp(4,\IQ)$, do not describe all holomorphic Siegel modular cusp forms.
Indeed, at least if the weight of the modular forms is high enough, one is lead to conjecture that the paramodular holomorphic cusp forms exactly correspond to those holomorphic modular cusp forms for which their local non-archimedean 
representations, considered from an automorphic point of view, are generic representations. 
Under certain technical restrictions, we show that this is indeed the case. 

To be more precise,
suppose $\Pi=\bigotimes_v\Pi_v$ is a paramodular cuspidal automorphic irreducible representation
of $\GSp(4,\IA_\IQ)$ of odd paramodular level, which is not CAP and whose archimedean factor $\Pi_\infty$
is in the discrete series. Then under the assumption of the hypothesis below we prove that
the local representations $\Pi_v$ are generic at all non-archimedean places $v$.
The restriction to odd paramodular level comes from Danisman \cite{Danisman} and can probably be removed.
Furthermore, we show that the hypothesis implies that $\Pi$ occurs in the cuspidal spectrum with multiplicity one
and is uniquely determined by almost all of its local factor representations.
The hypothesis imposed concerns the non-vanishing of central $L$-values and is crucial for our approach.

\begin{hypothesis}\label{conj:non-vanishing-L-value}
Suppose $\Pi$ is a globally generic unitary cuspidal automorphic irreducible representation of $\GSp(4,\IA_\IQ)$ and $\alpha$ and $\beta>0$ are real numbers.
Then there is a unitary idele class character $\mu:\IQ^\times\backslash\IA^\times_\IQ\to\IC^\times$, locally trivial at a prescribed non-archimedean place of $\IQ$, such that the twisted Novodvorsky L-function
\begin{equation} L_{\mathrm{Nvd}}(\Pi,\mu,s)\end{equation}
does not vanish at $s=1/2+i(\alpha+k\beta)$ for some integer $k$.
\end{hypothesis}
The analogous hypothesis for the group $\GL(4)$ would imply our hypothesis, see Prop.\,\ref{prop:lift_gsp4_gl4}. 
The corresponding statement for $\GL(2)$ is well-known \cite[Thm.\,4]{Waldspurger_Correspondence_de_Shimura}.
For $\GL(r)$, $r=1,2,3$, compare \cite{Friedberg-Hoffstein}, \cite{Hoffstein_Kantorovich}.
An approximative result for $\GL(4)$ has been shown by Barthel and Ramakrishnan
\cite{Barthel_Rama_Non_vanishing_of_L-functions}, later improved by Luo \cite{Luo_Non_vanishing_of_L-functions}:
Given a unitary globally generic cuspidal automorphic irreducible representation $\Pi$ of $\GL(4,\IA_\IQ)$,
a finite set $S$ of $\IQ$-places and a complex number $s_0$ with $\Re(s_0)\neq1/2$ there are infinitely many Dirichlet characters $\mu$ such that $\mu_v$ is unramified for $v\in S$ and the completed $L$-function $\Lambda((\mu\circ\det)\otimes\Pi,s)$ does not vanish at $s=s_0$.


We remark, there is good evidence for our result (Thm.\,\ref{thm:stable_para_implies_glob_generic}) on genericity of paramodular representations. In fact, the generalized strong Ramanujan conjecture for cuspidal automorphic irreducible representations $\Pi=\otimes'_v\Pi_v$ of $\GSp(4,\IA_\IQ)$ (not CAP) predicts that every local representation $\Pi_v$ should be tempered. But paramodular tempered local representations $\Pi_v$ at non-archimedean places are always generic by Lemma \ref{RS-temps_are_param_iff_gen}.

\section{Preliminaries}
The group $\mathbf{G}=\GSp(4)$ (symplectic similitudes  of genus two) is defined over $\mathbb{Z}$ by the equation 
\begin{equation*}g^tJg=\lambda J\end{equation*} for $(g,\lambda)\in \GL(4)\times\GL(1)$ and $J=\left(\begin{smallmatrix}0& w\\- w&0\end{smallmatrix}\right)$ with $w=\left(\begin{smallmatrix}0& 1\\1 &0\end{smallmatrix}\right)$. Since $\lambda$ is uniquely determined by $g$, we write $g$ for $(g,\lambda)$ and obtain the similitude character
\begin{equation*}\simi:\GSp(4)\to \GL(1)\,,\quad g\mapsto \lambda.
\end{equation*}

Fix a totally real number field $F/\IQ$ with integers $\mathfrak{o}$ and adele ring $\IA_F=\IA_\infty\times\IA_{\mathrm{fin}}$. 
For the profinite completion of $\mathfrak{o}$ we write $\mathfrak{o}_{\mathrm{fin}}\subseteq \IA_{\mathrm{fin}}$.
The paramodular group $K^{\mathrm{para}}(\mathfrak{a})\subseteq \GSp(4,\IA_{\mathrm{fin}})$ attached to a non-zero ideal $\mathfrak{a}\subseteq\mathfrak{o}$ is the group of all 
\begin{align*}
g\in
\begin{pmatrix}
\mathfrak{o}_{\mathrm{fin}} & \mathfrak{o}_{\mathrm{fin}} &\mathfrak{o}_{\mathrm{fin}}  & \mathfrak{a}^{-1}\mathfrak{o}_{\mathrm{fin}}\\
\mathfrak{a}\mathfrak{o}_{\mathrm{fin}} & \mathfrak{o}_{\mathrm{fin}} &\mathfrak{o}_{\mathrm{fin}}  & \mathfrak{o}_{\mathrm{fin}}  \\
\mathfrak{a}\mathfrak{o}_{\mathrm{fin}} & \mathfrak{o}_{\mathrm{fin}} &\mathfrak{o}_{\mathrm{fin}}  & \mathfrak{o}_{\mathrm{fin}}  \\
\mathfrak{a}\mathfrak{o}_{\mathrm{fin}} & \mathfrak{a}\mathfrak{o}_{\mathrm{fin}} &\mathfrak{a}\mathfrak{o}_{\mathrm{fin}}  & \mathfrak{o}_{\mathrm{fin}}  \\
\end{pmatrix} \cap \GSp(4,\IA_{\mathrm{fin}}),\qquad \simi(g)\in\mathfrak{o}_\mathrm{fin}^{\times}.
\end{align*} 

An irreducible smooth representation $\Pi=\Pi_\infty\otimes\Pi_{\mathrm{fin}}$ of $\GSp(4,\IA_F)$ is called paramodular
if $\Pi_{\mathrm{fin}}$ admits non-zero invariants under some paramodular group $K^{\mathrm{para}}(\mathfrak{a})$.

Two irreducible automorphic representations are said to be weakly equivalent if they are locally isomorphic at almost every place.
A cuspidal automorphic irreducible representation of $\GSp(4)$ is CAP if it is weakly equivalent to a constituent
of a globally parabolically induced representation from a cuspidal representation of the Levi quotient of a proper parabolic subgroup.
In that case we say that $\Pi$ is strongly associated to this parabolic.
The standard Borel $\mathbf{\Borel}$, Siegel parahoric $\mathbf{\Siegel}$ and Klingen parabolic $\mathbf{\Klingen}$ are
\begin{equation*}
  \mathbf{\Borel}=\left(
\begin{smallmatrix}\ast&\ast&\ast&\ast\\&\ast&\ast&\ast\\&&\ast&\ast\\&&&\ast\end{smallmatrix}
\right)\cap \mathbf{G}\ ,\qquad 
 \mathbf{\Siegel}=\left(
\begin{smallmatrix}\ast&\ast&\ast&\ast\\\ast&\ast&\ast&\ast\\&&\ast&\ast\\&&\ast&\ast\end{smallmatrix}
\right)\cap \mathbf{G}\ ,\qquad 
\mathbf{\Klingen}=\left(
\begin{smallmatrix}\ast&\ast&\ast&\ast\\&\ast&\ast&\ast\\&\ast&\ast&\ast\\&&&\ast\end{smallmatrix}
\right)\cap \mathbf{G}\ .
\end{equation*}

\section{Poles of local spinor factors}
Fix a local nonarchimedean place $v$ of $F$ with completion $F_v$, valuation character $\nu(x)=|x|_v$ for $x\in F_v$, residue field $\mathfrak{o}_v/\mathfrak{p}_v$ of odd order $q$ and uniformizer $\varpi\in\mathfrak{p}_v$. In this section we consider preunitary irreducible admissible representations $\Pi_v$ of $G=\mathbf{G}(F_v)$.
Such a $\Pi_v$ is called paramodular if it admits non-zero invariants under the local factor $K_v^{\mathrm{para}}(\mathfrak{a})$ of the paramodular group attached to some non-zero ideal $\mathfrak{a}$ of the integers $\mathfrak{o}$.
\begin{lemma}\label{lem:param_trivial_cent_char_up_to_unramif_twist}
For every paramodular $\Pi_v$, there is an unramified character $\omega'_v$ of $F_v^\times$, such that
the twist $(\omega'_v\circ \simi)\otimes\Pi_v$ has trivial central character.
\end{lemma}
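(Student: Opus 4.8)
The plan is to pin down the central character of a paramodular $\Pi_v$ by means of the integral part of the center, and then to kill it off by an unramified twist through the similitude character.

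First I would identify the center $Z$ of $G=\mathbf{G}(F_v)$ with $F_v^\times$ via $z\mapsto\diag(z,z,z,z)$; under this identification $\simi$ restricts to the squaring map $z\mapsto z^2$, since $\diag(z,z,z,z)^tJ\diag(z,z,z,z)=z^2J$. Thus the central character of $\Pi_v$ may be regarded as a character $\omega\colon F_v^\times\to\IC^\times$, namely $\omega(z)$ is the scalar by which $\diag(z,z,z,z)$ acts.

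Second, I would inspect the matrix conditions defining $K_v^{\mathrm{para}}(\mathfrak{a})$: for $z\in\mathfrak{o}_v^\times$ the scalar matrix $\diag(z,z,z,z)$ has diagonal entries in $\mathfrak{o}_v$, vanishing off-diagonal entries, and similitude $z^2\in\mathfrak{o}_v^\times$, hence lies in $K_v^{\mathrm{para}}(\mathfrak{a})$ for every non-zero ideal $\mathfrak{a}$. Since by assumption $\Pi_v$ has a non-zero $K_v^{\mathrm{para}}(\mathfrak{a})$-fixed vector, $\omega$ is trivial on $\mathfrak{o}_v^\times$, i.e.\ unramified; in particular it is determined by the single value $\omega(\varpi)\in\IC^\times$. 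Then I choose an unramified character $\omega'_v$ of $F_v^\times$ with $\omega'_v(\varpi)^2=\omega(\varpi)^{-1}$, which is possible because every non-zero complex number has a square root (note $\omega'_v$ need not be unitary, which is harmless here). The central character of $(\omega'_v\circ\simi)\otimes\Pi_v$ is $z\mapsto\omega'_v(z^2)\,\omega(z)=\omega'_v(z)^2\,\omega(z)$, which is unramified and takes the value $\omega'_v(\varpi)^2\,\omega(\varpi)=1$ at $\varpi$, hence is the trivial character.

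There is no serious obstacle here: the only point requiring any care is the elementary verification that $\{\diag(z,z,z,z):z\in\mathfrak{o}_v^\times\}\subseteq K_v^{\mathrm{para}}(\mathfrak{a})$, which forces $\omega$ to be unramified and thereby reduces the whole statement to extracting a square root in $\IC^\times$.
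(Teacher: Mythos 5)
Your proof is correct and follows the same approach as the paper: observe that the central elements $\diag(z,z,z,z)$ with $z\in\mathfrak{o}_v^\times$ lie in $K_v^{\mathrm{para}}(\mathfrak{a})$, so the central character is unramified, and then choose an unramified square root through the similitude character. The paper's proof is simply a terser version of this argument, leaving the final square-root step implicit.
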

\begin{proof}
The intersection of $K_v^\mathrm{para}(\mathfrak{a})$ with the center of $G$ is isomorphic to $\mathfrak{o}_v^\times$, so the central character of $\Pi_v$ is unramified.
\end{proof}
\begin{lemma}\label{RS-temps_are_param_iff_gen}
For tempered preunitary irreducible admissible representations $\Pi_v$ the following assertions are equivalent:
\begin{enumerate}
\item[i)] $\Pi_v$ is generic and has unramified central character,
\item[ii)] $\Pi_v$ is paramodular.
\end{enumerate}
\end{lemma}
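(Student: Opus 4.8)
The plan is to deduce this from the Roberts--Schmidt theory of paramodular newforms \cite{Roberts-Schmidt}, after a preliminary reduction to the case of trivial central character. The first observation is that all three properties in play are stable under twisting $\Pi_v$ by $\omega_v\circ\simi$ for an unramified unitary character $\omega_v$ of $F_v^\times$. Indeed, genericity is preserved because $\simi$ is trivial on the unipotent radical supporting the Whittaker datum; temperedness is preserved because $\omega_v\circ\simi$ is unitary; and paramodularity is preserved because $\simi\bigl(K_v^{\mathrm{para}}(\mathfrak{a})\bigr)\subseteq\mathfrak{o}_v^\times$, so $\omega_v\circ\simi$ restricts trivially to every paramodular group. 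Since the central character of a tempered $\Pi_v$ is unitary and unramified (in case (ii) this last point is Lemma~\ref{lem:param_trivial_cent_char_up_to_unramif_twist}), and since $\simi$ restricted to the center of $\mathbf{G}$ is the squaring map, one can choose $\omega_v$ so that the twist has trivial central character. So it is enough to prove the equivalence under the additional assumption that $\Pi_v$ has trivial central character; and in case (ii) the ``unramified central character'' half of (i) is already supplied by Lemma~\ref{lem:param_trivial_cent_char_up_to_unramif_twist}.

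For the implication (i)$\Rightarrow$(ii) I would invoke the existence part of the Roberts--Schmidt newform theorem: a generic irreducible admissible representation of $\mathbf{G}(F_v)$ with trivial central character always has a non-zero paramodular vector --- in fact a newform of paramodular level equal to its conductor. Hence $\Pi_v$ is paramodular; temperedness is not needed here.

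For the converse (ii)$\Rightarrow$(i) I would appeal to the Roberts--Schmidt classification of the paramodular representations with trivial central character. Their tables exhaust these representations, and the non-generic ones among them form an explicit list --- the Saito--Kurokawa type Langlands quotients together with the remaining non-generic entries of the tables of induced representations --- every member of which is \emph{non-tempered} (and, in particular, non-supercuspidal). Consequently a tempered paramodular representation with trivial central character has no choice but to be generic, which is (i). Twisting back up by the inverse of $\omega_v\circ\simi$ then returns the original $\Pi_v$ with all properties intact.

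The main obstacle is this converse direction: there is no shortcut avoiding the Roberts--Schmidt classification, and the delicate point is to verify that the entire list of non-generic paramodular representations consists of non-tempered ones --- in particular that no non-generic \emph{supercuspidal} representation is paramodular, which is the part of their theory one must be careful to cite in full. By comparison, the twisting reduction and the direction (i)$\Rightarrow$(ii) are routine.
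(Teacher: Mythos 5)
Your proposal is correct and follows the same route as the paper: reduce to trivial central character by an unramified unitary twist (the paper's Lemma~\ref{lem:param_trivial_cent_char_up_to_unramif_twist}), then invoke the Roberts--Schmidt paramodular theory, which the paper cites as \cite[7.5.8]{Roberts-Schmidt}. You have merely unpacked what that reference contains --- the newform existence theorem for (i)$\Rightarrow$(ii), and the classification tables plus the fact that non-generic paramodular representations (including the supercuspidal case) are non-tempered for (ii)$\Rightarrow$(i) --- which is useful exposition but not a genuinely different argument.
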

\begin{proof}
By Lemma~\ref{lem:param_trivial_cent_char_up_to_unramif_twist}, we can assume that $\Pi_v$ has trivial central character. Then this is a result of Roberts and Schmidt \cite[7.5.8]{Roberts-Schmidt}.
\end{proof}
Recall that for every smooth character $\chi:F_v^\times\to\mathbb{C}^\times$ the local Tate $L$-factor is
\begin{equation*}
 L(\chi,s)=\begin{cases}  (1-\chi(\varpi)q^{-s})^{-1}&\chi\ \text{unramified,}\\   1&\chi\ \text{ramified.}  \end{cases}
\end{equation*}

For a generic irreducible admissible representation $\Pi_v$ of $G$ and a smooth complex character $\mu$ of $F_v^\times$, Novodvorsky \cite{Novod_L_factors} has defined a local degree four spinor $L$-factor $L_{\mathrm{Nvd}}(\Pi_v,\mu,s)$. Piatetskii-Shapiro and Soudry \cite{PS-L-Factor_GSp4}, \cite{Soudry_Piatetski_Weil_lift} have extended this construction to a local degree four spinor $L$-factor $L_{\mathrm{PSS}}(\Pi_v,\mu,s)$ for infinite-dimensional irreducible admissible representations of $G$. 

We write $L_{\mathrm{PSS}}(\mu\Pi_v,s)$ for $L_{\mathrm{PSS}}(\Pi_v,\mu,s)=L_{\mathrm{PSS}}(\mu\Pi_v,1,s)$.
Poles of $L_{\mathrm{PSS}}(\Pi_v,s)$ are called regular if they occur as poles of certain zeta integrals \cite[\S 2]{Soudry_Piatetski_Weil_lift}; the other poles are exceptional.

\begin{lemma}\label{Lem:Factors_of_PS_Nov_equiv}
Suppose the residue characteristic of $F_v$ is odd.
For every generic irreducible admissible representation $\Pi_v$ the local
factors $L_{\mathrm{Nvd}}(\Pi_v,\mu,s)$ and $L_{\mathrm{PSS}}(\Pi_v,\mu,s)$ coincide.
\end{lemma}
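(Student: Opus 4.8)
The plan is to reduce the comparison of the two spinor $L$-factors to the classification of generic irreducible admissible representations of $G=\GSp(4,F_v)$ and to check the identity case by case. First I would recall that both $L_{\mathrm{Nvd}}(\Pi_v,\mu,s)$ and $L_{\mathrm{PSS}}(\Pi_v,\mu,s)$ are defined via families of local zeta integrals attached to Whittaker functionals, the difference being the ambient model (Novodvorsky uses the Bessel/Whittaker model directly on $\GSp(4)$, while Piatetskii-Shapiro--Soudry realize the degree-four factor through a theta-type construction or through the $\GSp(4)\times\GL(2)$-type integrals). Since $\Pi_v$ is generic, its Whittaker model exists and is unique, so both $L$-factors are intrinsically attached to $\Pi_v$; the content of the lemma is that the ratios agree. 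By a standard twisting argument one may reduce to $\mu=1$, writing $L_{\mathrm{PSS}}(\mu\Pi_v,s)=L_{\mathrm{PSS}}(\Pi_v,\mu,s)$ as in the excerpt, and using that the constructions are compatible with twisting by characters of $F_v^\times$ pulled back along $\simi$.

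Next I would invoke the Roberts--Schmidt classification of generic irreducible admissible representations of $\GSp(4,F_v)$: every such $\Pi_v$ is either supercuspidal, or a subquotient of a representation parabolically induced from the Borel, Siegel, or Klingen parabolic from essentially tempered or generic data on the Levi. For the induced (non-supercuspidal) generic representations, both $L$-factors can be computed explicitly: the Novodvorsky factor via the tables in \cite{Roberts-Schmidt}, and the Piatetskii-Shapiro--Soudry factor via the multiplicativity of $\gamma$-factors and the known behavior under parabolic induction (each becomes a product of Tate factors and $\GL(2)$-Rankin--Selberg factors built from the inducing data). Comparing the two lists of factors entry by entry gives equality in the non-supercuspidal generic cases. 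The oddness of the residue characteristic enters precisely here: the Piatetskii-Shapiro--Soudry construction in \cite{Soudry_Piatetski_Weil_lift}, via its use of the Weil representation and theta lifting, is cleanest (and the regular-versus-exceptional pole analysis in \cite[\S2]{Soudry_Piatetski_Weil_lift} applies) when $2$ is invertible.

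For the supercuspidal generic $\Pi_v$, both $L$-factors are expected to be trivial ($=1$), and I would argue this directly: a generic supercuspidal representation has no Klingen-type poles, so the regular part of $L_{\mathrm{PSS}}$ is $1$, and the absence of exceptional poles for supercuspidal $\Pi_v$ is precisely what \cite[\S2]{Soudry_Piatetski_Weil_lift} establishes under the odd residue characteristic hypothesis; on the Novodvorsky side, triviality for supercuspidals follows from the shape of the zeta integrals in \cite{Novod_L_factors}. The main obstacle I anticipate is the bookkeeping for the exceptional poles in the Piatetskii-Shapiro--Soudry factor: one must show that no extra factors appear beyond those predicted by the Novodvorsky recipe, which requires controlling the poles of the theta-lift zeta integrals rather than just their multiplicativity properties; this is where the odd-residue-characteristic assumption is genuinely used and where one must lean most heavily on the detailed analysis in \cite{PS-L-Factor_GSp4} and \cite{Soudry_Piatetski_Weil_lift}. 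Once the supercuspidal and non-supercuspidal generic cases are both settled, the lemma follows since these exhaust all generic irreducible admissible $\Pi_v$.
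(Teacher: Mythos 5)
Your overall strategy — reduce to a case-by-case comparison of Tate factors using the classification of generic irreducibles, with the supercuspidal case handled by triviality of both factors — matches the paper's in spirit, but there are two substantive divergences worth flagging.

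First, the obstacle you correctly identify (exceptional poles in $L_{\mathrm{PSS}}$) is precisely what you do not resolve; you write that one must ``lean most heavily on the detailed analysis'' in \cite{PS-L-Factor_GSp4}, \cite{Soudry_Piatetski_Weil_lift} without saying which statement does the work. The paper disposes of this at the outset by invoking \cite[Thm.\,4.4]{PS-L-Factor_GSp4}: both $L$-factors are finite products of Tate factors, hence completely determined by their (regular) poles, so the comparison reduces to matching pole sets. Without this reduction, the exceptional pole question remains an open gap in your argument.

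Second, for the non-supercuspidal computation on the PSS side you propose to use ``multiplicativity of $\gamma$-factors and the known behavior under parabolic induction.'' This is not the route the paper takes, and it is not clear that multiplicativity is available for the Piatetskii-Shapiro--Soudry construction in the form you would need. What the paper actually cites is Danisman's explicit determination of the \emph{regular poles} of $L_{\mathrm{PSS}}$ case by case in the Sally--Tadi\'c classification (\cite{Danisman}, \cite{Danisman2}) — a direct zeta-integral computation, not a multiplicativity argument — together with Takloo-Bighash \cite{TB_L-factors} for the Tate factors of $L_{\mathrm{Nvd}}$ (your suggestion to read the Novodvorsky factor from the Roberts--Schmidt tables is plausible but is not what the paper uses, and Roberts--Schmidt define paramodular $L$-factors rather than Novodvorsky $L$-factors). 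The odd-residue-characteristic hypothesis enters through Danisman's results, not primarily through the Weil-representation analysis you point to. So the proposal has the right skeleton, but the two load-bearing references that make the comparison rigorous (\cite[Thm.\,4.4]{PS-L-Factor_GSp4} and Danisman's pole tables) are missing, and the proposed substitute (multiplicativity of the PSS $\gamma$-factor) would need independent justification.
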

\begin{proof}The local $L$-factors are products of Tate factors, so they are uniquely determined by their regular poles \cite[Thm.\,4.4]{PS-L-Factor_GSp4}. The local factor $L_{\mathrm{PSS}}(\Pi_v,\mu,s)$ is trivial for generic cuspidal $\Pi_v$ \cite[\S5.1]{Danisman}, \cite[4.3]{PS-L-Factor_GSp4}. For non-cuspidal $\Pi_v$, the regular poles have been determined explicitly by Danisman \cite{Danisman}, \cite{Danisman2}. For the Tate factors of $L_{\mathrm{Nvd}}(\Pi_v,s)$, see Takloo-Bighash \cite{TB_L-factors}.
\end{proof}

The non-cuspidal $\Pi_v$ have been classified by Sally and Tadic \cite{Sally_Tadic} and we use their notation. Roberts and Schmidt \cite{Roberts-Schmidt} have designated them with roman numerals.

\begin{lemma}\label{lem:reg_poles} Suppose the residue characteristic of $F_v$ is odd. Let $\Pi_v$ be a preunitary non-generic irreducible admissible representations of $G$, that is not one-dimensional. Then $L_{\mathrm{PSS}}(\Pi_v,s)$ has a regular pole on the line $\Re(s)=1/2$ exactly in the following cases:
\begin{enumerate}
\item[IIb] $\Pi_v\cong(\chi\circ\det)\rtimes\sigma$                    for a pair of characters $\chi,\sigma$ that are either both unitary or satisfy $\chi^2=\nu^{2\beta}$ for $0<\beta<\tfrac{1}{2}$ with unitary $\chi\sigma$.
$L_{\mathrm{PSS}}(s,\Pi_v)$ contains the Tate factor $L(s,\nu^{-1/2}\chi\sigma)$, so poles with $\Re(s)=1/2$ occur if and only if $\chi\sigma$ is unramified.
\item[IIIb] $\Pi_v\cong\chi\rtimes(\sigma\circ\det)$                   for unitary characters $\sigma$ and $\chi$ with $\chi\neq1$. 
The regular poles with $\Re(s)=1/2$ come from the Tate factors $L(s,\nu^{-1/2}\sigma)$ and $L(s,\nu^{-1/2}\sigma\chi)$ in $L_{\mathrm{PSS}}(s,\Pi_v)$, so they occur for unramified $\sigma$ or $\sigma\chi$, respectively.
\item[Vb,c] $\Pi_v\cong L(\nu^{1/2}\xi\St_{\GL(2)},\nu^{-1/2}\sigma)$  for unitary characters $\sigma$ and $\xi$ with $\xi^2=1\neq\xi$.
The regular poles with $\Re(s)=1/2$ come from the Tate factor $L(s,\nu^{-1/2}\sigma)$ and appear for unramified $\sigma$.
\item[Vd] $\Pi_v\cong L(\nu\xi,\xi\rtimes\nu^{-1/2}\sigma)$            for unitary characters $\sigma$ and $\xi$ with $\xi^2=1\neq\xi$.
The regular poles with $\Re(s)=1/2$ come from the Tate factors $L(s,\nu^{-1/2}\sigma)$ and $L(s,\nu^{-1/2}\xi\sigma)$, and occur for unramified $\sigma$ or $\xi\sigma$, respectively.
\item[VIc] $\Pi_v\cong L(\nu^{1/2}\St_{\GL(2)},\nu^{-1/2}\sigma)$      for unitary $\sigma$.
The Tate factor $L(s,\nu^{-1/2}\sigma)$ gives rise to regular poles with $\Re(s)=1/2$ when $\sigma$ is unramified.
\item[VId] $\Pi_v\cong L(\nu,1\rtimes \nu^{-1/2}\sigma)$               for unitary $\sigma$. The Tate factor $L(s,\nu^{-1/2}\sigma)^2$ gives rise to double regular poles with $\Re(s)=1/2$ when $\sigma$ is unramified.
\item[XIb] $\Pi_v\cong L(\nu^{1/2}\pi,\nu^{-1/2}\sigma)$, where $\pi$ is a preunitary cuspidal irreducible admissible representation of $\GL(2,F_v)$ with trivial central character and $\sigma$ is a unitary character. The regular poles with $\Re(s)=1/2$ occur with the Tate factor $L(s,\nu^{-1/2}\sigma)$ when $\sigma$ is unramified.
\end{enumerate}
\end{lemma}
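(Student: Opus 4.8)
The plan is a case-by-case analysis: I would intersect the Sally--Tadic classification \cite{Sally_Tadic} of the preunitary dual with Danisman's explicit determination \cite{Danisman},\cite{Danisman2} of the regular poles of the spinor zeta integrals, and match the resulting lists.

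First I would dispose of the supercuspidal case. If $\Pi_v$ is supercuspidal its matrix coefficients are compactly supported modulo the centre, so the defining zeta integrals of \cite[\S2]{Soudry_Piatetski_Weil_lift} are entire and $L_{\mathrm{PSS}}(\Pi_v,s)$ has no regular poles at all; hence supercuspidal $\Pi_v$ do not occur in the list, just as in the generic cuspidal case \cite[\S5.1]{Danisman},\cite[4.3]{PS-L-Factor_GSp4}. So one may assume $\Pi_v$ is non-supercuspidal, non-generic, and not one-dimensional. By the Sally--Tadic classification, tabulated by Roberts and Schmidt \cite{Roberts-Schmidt}, such $\Pi_v$ are exactly the representations of types IIb, IIIb, IVb, IVc, Vb, Vc, Vd, VIb, VIc, VId, VIIIb, IXb, XIb, and preunitarity constrains the inducing data as recorded there (e.g.\ for IIb it forces $\chi,\sigma$ unitary, or $\chi^2=\nu^{2\beta}$ with $0<\beta<\tfrac12$ and $\chi\sigma$ unitary).

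For each of these thirteen types Danisman writes $L_{\mathrm{PSS}}(\Pi_v,s)$ as an explicit finite product of Tate factors $L(s,\chi_j)$, with the $\chi_j$ built from the inducing data. Since $L(s,\chi_j)$ is $1$ for $\chi_j$ ramified and $(1-\chi_j(\varpi)q^{-s})^{-1}$ for $\chi_j$ unramified --- the latter with its pole on the vertical line $\Re(s)=\log_q|\chi_j(\varpi)|$ --- a regular pole on the critical line $\Re(s)=\tfrac12$ occurs precisely when some $\chi_j$ equals $\nu^{-1/2}$ times a unitary unramified character. Going type by type, I would impose preunitarity, single out the $\chi_j$ of this shape, and read off the pole; this produces exactly the Tate factors claimed --- $L(s,\nu^{-1/2}\sigma)$ and $L(s,\nu^{-1/2}\sigma\chi)$ for IIIb, the analogous two factors for Vd, the square $L(s,\nu^{-1/2}\sigma)^2$ for VId, $L(s,\nu^{-1/2}\sigma)$ for XIb and for the Vb,c and VIc Langlands quotients, and $L(s,\nu^{-1/2}\chi\sigma)$ for IIb --- while the remaining Tate factors in Danisman's formulas all have $|\chi_j(\varpi)|\neq q^{1/2}$, so their poles avoid the critical line; and when the stated conditions on the data fail, the relevant $\chi_j$ is ramified, so $L(s,\chi_j)=1$ and no pole survives.

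It remains to rule out the five types absent from the list. Types IVb and IVc are not unitarizable, and in any case their inducing characters carry exponents $\pm\tfrac32,\pm2$, so their spinor factors have poles only far off the critical line. Types VIb and VIIIb are tempered, and IXb is a Langlands quotient whose $\GL(2)$-datum is supercuspidal; in each of these three cases Danisman's formula for $L_{\mathrm{PSS}}(\Pi_v,s)$ involves only Tate characters $\chi_j$ with $|\chi_j(\varpi)|\neq q^{1/2}$ (so the poles lie off $\Re(s)=\tfrac12$), or is trivial outright. Hence none of IVb, IVc, VIb, VIIIb, IXb contributes a regular pole on the critical line, which completes the case analysis. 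I expect the genuine obstacle to be precisely this bookkeeping: correctly matching Danisman's tables against the Sally--Tadic list while tracking every similitude normalization and Weyl-group identification among the induced representations, so as to be sure the critical-line poles are exactly those listed and that the near-misses --- the boundary $\beta\to\tfrac12$ of the IIb complementary series and the five excluded types --- really are excluded.
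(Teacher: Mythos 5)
Your proposal is correct and follows essentially the same route as the paper, which reduces the lemma to a case-by-case reading of Danisman's tables of regular poles [Table A.1 in both \cite{Danisman} and \cite{Danisman2}] intersected with the Sally--Tadi\'c preunitary classification, plus the observation that cuspidal representations contribute no regular poles. The only caveat: your heuristic that compact support of matrix coefficients makes the Bessel-model zeta integrals of \cite[\S2]{Soudry_Piatetski_Weil_lift} entire does not immediately follow (those integrals are not matrix-coefficient integrals), but this is harmless since you also cite \cite[\S5.1]{Danisman}, which is the source the paper itself relies on for the cuspidal case.
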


\begin{proof} For the non-cuspidal case, see Danisman \cite[Table A.1]{Danisman}, \cite[Table A.1]{Danisman2}.
For cuspidal irreducible $\Pi_v$, there are no regular poles \cite[\S5.1]{Danisman}.
\end{proof}

\begin{lemma}\label{lem:para} 
A non-generic preunitary irreducible admissible representation $\Pi_v$ of $G$ is paramodular if and only if it is isomorphic to one of the following:
\begin{enumerate}
\item[IIb] $(\chi\circ\det)\rtimes\sigma$,                  for characters $\chi,\sigma$ such that $\chi\sigma$ is unramified and either both are unitary or $\chi^2=\nu^{2\beta}$ for $0<\beta<\tfrac{1}{2}$ with unitary characters $\chi\sigma$,
\item[IIIb] $\chi\rtimes(\sigma\circ\det)$,                 for unramified unitary characters $\chi,\sigma$ with $\chi\neq1$,
\item[IVd] $\sigma\circ \simi$,                                 for unramified unitary characters $\sigma$,
\item[Vb,c] $L(\nu^{1/2}\xi\St_{\GL(2)},\nu^{-1/2}\sigma)$, for $\xi$ with $\xi^2=1\neq\xi$ and unramified unitary $\sigma$,
\item[Vd] $L(\nu\xi,\xi\rtimes\nu^{-1/2}\sigma)$,           for unramified unitary characters $\sigma,\xi$ with $\xi^2=1\neq\xi$,
\item[VIc] $L(\nu^{1/2}\St,\nu^{-1/2}\sigma)$,              for unramified unitary characters $\sigma$,
\item[VId] $L(\nu,1\rtimes \nu^{-1/2}\sigma)$,              for unramified unitary characters $\sigma$,
\item[XIb] $L(\nu^{1/2}\pi,\nu^{-1/2}\sigma)$,              for a cuspidal preunitary irreducible admissible representation $\pi$ of $\GL(2,F_v)$ with trivial central character and an unramified unitary $\sigma$.
\end{enumerate}
\end{lemma}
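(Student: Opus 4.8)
The plan is to treat this as a finite classification that combines three inputs: a reduction to trivial central character, the Sally--Tadi\'c description of the preunitary dual, and Roberts and Schmidt's determination of the paramodular invariant subspaces. First, by Lemma~\ref{lem:param_trivial_cent_char_up_to_unramif_twist} any paramodular $\Pi_v$ has unramified central character, which (being preunitary) is then unramified \emph{unitary}; twisting by a suitable unramified unitary $\omega'_v\circ\simi$ preserves both preunitarity and the existence of paramodular vectors, so it suffices to list the non-generic preunitary $\Pi_v$ with \emph{trivial} central character that are paramodular and then twist back. This is the source of the ``unramified unitary'' hypotheses on $\chi,\sigma,\xi$ in the eight cases. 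Next, by Lemma~\ref{RS-temps_are_param_iff_gen} a tempered non-generic $\Pi_v$ is never paramodular; this removes all non-generic supercuspidals and the tempered non-supercuspidal non-generic constituents (types~VIb and~VIIIb in the Roberts--Schmidt numbering). What remains are the one-dimensional representations and the non-tempered preunitary non-generic ones. The one-dimensional case is immediate: $\Pi_v=\sigma\circ\simi$ admits a $K_v^{\mathrm{para}}(\mathfrak{a})$-fixed vector if and only if $\sigma$ is trivial on $\simi(K_v^{\mathrm{para}}(\mathfrak{a}))=\mathfrak{o}_v^\times$, i.e.\ iff $\sigma$ is unramified, which is case~IVd.

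For the non-tempered preunitary non-generic $\Pi_v$ I would run through the Sally--Tadi\'c list: the Langlands quotients of types IIb, IIIb, IVb, IVc, Vb, Vc, Vd, VIc, VId, IXb, XIb, together with the only continuous family, the complementary series of type~IIb. Types~IVb and~IVc are never preunitary, so they drop out. For each of the remaining types, Roberts and Schmidt's $P_3$-theory yields the dimensions of the spaces of paramodular vectors of each level; $\Pi_v$ is paramodular exactly when one of these is positive, and their tables show that this happens precisely for IIb, IIIb, Vb, Vc, Vd, VIc, VId and XIb (and not for IXb), and only under the stated unramifiedness conditions on the characters of the cuspidal support --- together with $\chi\neq1$ for type~III and trivial central character of $\pi$ for type~XI, the latter being in any case the condition for type~XI to occur. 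Imposing the Sally--Tadi\'c unitarity bounds then pins down the remaining parameters: for IIb one is left with the two stated ranges (both characters unitary, or $0<\beta<\tfrac12$), and in the other cases $\sigma$, $\xi$ and $\pi$ must be unitary. This produces the eight families of the statement, and conversely the same tables exhibit an explicit nonzero paramodular vector in each, giving the ``if'' direction.

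The main obstacle is the bookkeeping for the non-tempered Langlands quotients: one has to line up Roberts and Schmidt's paramodular-level tables, which are organised by the full non-supercuspidal classification and computed under the hypothesis of trivial central character, with the Sally--Tadi\'c unitarity constraints, paying attention to the borderline parameters --- the two unitary ranges of IIb, the endpoints of the complementary series, and the half-integral twists $\nu^{\pm1/2},\nu^{\pm1}$ occurring in the Langlands data of types~V, VI and~XI. In particular one must check that for these representations the existence of a paramodular vector is equivalent to the unramifiedness of the \emph{full} supporting data in cases~IIIb, Vd and so on, and not merely of the central character: comparison with Lemma~\ref{lem:reg_poles} shows, for instance, that type~Vd with $\sigma$ unramified but $\xi$ a ramified quadratic character still has a regular pole on $\Re(s)=1/2$ yet is \emph{not} paramodular, whereas in case~IIb it is only the product $\chi\sigma$ that must be unramified. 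Finally the Saito--Kurokawa type~XIb needs a separate look, since there $\Pi_v$ is a theta lift from $\GL(2,F_v)$ and paramodularity becomes a condition on the datum $\pi$ (trivial central character, $\sigma$ unramified) rather than on an inducing character. None of this requires ideas beyond the cited tables of Roberts--Schmidt and Sally--Tadi\'c, but it makes the verification somewhat lengthy.
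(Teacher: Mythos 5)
Your proof is correct and follows essentially the same route as the paper's: reduce to trivial central character via Lemma~\ref{lem:param_trivial_cent_char_up_to_unramif_twist}, eliminate the tempered non-generic representations (in particular all non-generic supercuspidals) via Lemma~\ref{RS-temps_are_param_iff_gen}, and settle the remaining non-cuspidal cases by consulting Roberts and Schmidt's paramodular-level tables (Tables~A.2 and~A.12), intersected with the Sally--Tadi\'c unitarity constraints. The paper's own proof is just a terse version of this; your extra care with the one-dimensional case~IVd, the exclusion of~IVb/IVc on unitarity grounds, and the warning that unramifiedness of the full supporting data (not merely of the central character) is what the tables require, are accurate filling-in of the same argument rather than a different one.
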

\begin{proof} By Lemma \ref{lem:param_trivial_cent_char_up_to_unramif_twist}, we can assume that the central character is trivial. For non-cuspidal $\Pi_v$, see Tables A.2 and A.12 of Roberts and Schmidt \cite{Roberts-Schmidt}. Cuspidal non-generic preunitary representations are never paramodular by Lemma \ref{RS-temps_are_param_iff_gen}.
\end{proof}

\begin{proposition}\label{prop:local_params_have_pole_at_one_half} Suppose the residue characteristic of $F_v$ is odd.
Let $\Pi_v$ be a paramodular preunitary irreducible admissible representation of $G$, that is not one-dimensional. The following assertions are equivalent:
\begin{itemize}
\item[i)] $\Pi_v$ is non-generic,
\item[ii)] the spinor $L$-factor $L_{\mathrm{PSS}}(\Pi_v,s)$ has at least one pole on the line $\Re(s)=1/2$.
\end{itemize}
\end{proposition}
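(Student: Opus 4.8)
The plan is to deduce the proposition from Lemmas~\ref{Lem:Factors_of_PS_Nov_equiv}, \ref{lem:reg_poles} and~\ref{lem:para}, handling the two implications separately. For the implication from~i) to~ii), suppose $\Pi_v$ is paramodular, preunitary, not one-dimensional and non-generic. By Lemma~\ref{lem:para} it is isomorphic to one of the representations listed there; the type~IVd representation $\sigma\circ\simi$ is one-dimensional, hence excluded, so $\Pi_v$ has type IIb, IIIb, Vb,c, Vd, VIc, VId or XIb. Every one of these types also occurs in Lemma~\ref{lem:reg_poles}, and comparing the two lists shows that the constraints on the inducing characters imposed by paramodularity in Lemma~\ref{lem:para} (namely $\chi\sigma$ unramified in case~IIb, and the analogous unramifiedness of the characters $\sigma$, $\sigma\chi$, $\xi\sigma$ entering the other cases) are exactly the conditions under which Lemma~\ref{lem:reg_poles} exhibits a regular pole of $L_{\mathrm{PSS}}(\Pi_v,s)$ on the line $\Re(s)=1/2$. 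A regular pole is a pole, so~ii) holds.

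For the implication from~ii) to~i) I argue by contraposition: assuming $\Pi_v$ generic (paramodular, preunitary, not one-dimensional), I show that $L_{\mathrm{PSS}}(\Pi_v,s)$ has no pole with $\Re(s)=1/2$. By Lemma~\ref{Lem:Factors_of_PS_Nov_equiv} it is equivalent to work with $L_{\mathrm{Nvd}}(\Pi_v,s)$, a finite product of Tate factors $L(\mu_i,s)$. If $\Pi_v$ is cuspidal this product is trivial (as recalled in the proof of Lemma~\ref{Lem:Factors_of_PS_Nov_equiv}) and there is nothing to show. If $\Pi_v$ is non-cuspidal, I invoke the classification of the preunitary dual of $\GSp(4,F_v)$ by Sally and Tadic together with Takloo-Bighash's description of the Novodvorsky factors: a generic non-cuspidal preunitary $\Pi_v$ is an irreducible full induction from tempered data twisted by complementary-series characters $\nu^{\pm\beta}$ with $0\le\beta<\tfrac{1}{2}$, and accordingly every character $\mu_i$ occurring in $L_{\mathrm{Nvd}}(\Pi_v,s)$ has the form $\nu^{\beta_i}\mu_i^{0}$ with $\mu_i^{0}$ unitary and $|\beta_i|<\tfrac{1}{2}$ (when $\mu_i$ is ramified the corresponding factor is $1$). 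The only pole of $L(\nu^{\beta_i}\mu_i^{0},s)$ lies at $\Re(s)=-\beta_i\in(-\tfrac{1}{2},\tfrac{1}{2})$, hence no factor, and therefore not the product, has a pole on $\Re(s)=1/2$.

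The heart of the matter, and the only nonroutine point, is this last step: ruling out a generic preunitary representation with an ``edge'' exponent $\beta=\tfrac{1}{2}$. What must be quoted is that at every such reducibility point the generic constituent is tempered (a Steinberg-, $\delta$- or $\tau$-type representation), whereas the accompanying Langlands quotient --- which is the one carrying the exponent $\tfrac{1}{2}$ and hence the dangerous Tate factor $L(\nu^{-1/2}\sigma,s)$ --- is non-generic and is precisely one of the types IIb, IIIb, Vb,c, Vd, VIc, VId, XIb of Lemma~\ref{lem:reg_poles}. This is exactly the reducibility bookkeeping carried out by Sally--Tadic and Roberts--Schmidt, so the task is to cite it correctly rather than to prove something new; alternatively one avoids the general ``generic unitary'' principle altogether and instead runs through the finite list of non-cuspidal generic preunitary types, reading off $L_{\mathrm{Nvd}}$ from Takloo-Bighash in each case --- a longer but entirely mechanical check that the pole never lands on $\Re(s)=1/2$.
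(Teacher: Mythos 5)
Your proof of the implication i)$\Rightarrow$ii) is exactly what the paper does: compare the list of paramodular non-generic types from Lemma~\ref{lem:para} against the pole locations in Lemma~\ref{lem:reg_poles} and observe that the unramifiedness constraints match up, so each case in fact produces a regular pole on $\Re(s)=1/2$. No issue there.

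For the converse (contrapositive) your route is a genuine variant of the paper's one-line argument but contains an inaccuracy. The paper simply cites \cite[4.3]{PS-L-Factor_GSp4} to the effect that for generic $\Pi_v$ all poles of $L_{\mathrm{PSS}}(\Pi_v,s)$ are regular, and \cite{TB_L-factors} for the fact that these regular poles never sit on $\Re(s)=1/2$. You instead pass to $L_{\mathrm{Nvd}}$ via Lemma~\ref{Lem:Factors_of_PS_Nov_equiv} (which sidesteps the regular-vs-exceptional bookkeeping, a legitimate simplification) and then claim that for a non-cuspidal generic preunitary $\Pi_v$ every Tate factor $L(\nu^{\beta_i}\mu_i^0,s)$ has $|\beta_i|<\tfrac12$. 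That bound is false as stated: already for $\sigma\St_{\GL(2)}$ the Tate factor is $L(\nu^{1/2}\sigma,s)$, and for $\GSp(4)$-Steinberg-like generic representations (type IVa, Va, VIa, XIa, \dots) the exponents reach $1/2$ and $3/2$. What actually matters, and what you should assert, is that $\beta_i\neq -\tfrac12$ (equivalently, no Tate factor $L(\nu^{-1/2}\sigma,s)$ with unramified unitary $\sigma$ occurs for a generic preunitary $\Pi_v$): tempered generic constituents contribute exponents $\geq 0$ (poles on $\Re(s)\leq 0$) while strictly complementary-series twists keep the exponent in the open interval $(-\tfrac12,\tfrac12)$, and the endpoint $-\tfrac12$ is exactly where the reducible induction drops a non-generic Langlands quotient. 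Your second, ``mechanical'' alternative --- read $L_{\mathrm{Nvd}}$ off Takloo-Bighash's tables case by case --- is precisely the paper's citation and is the safe way to close the argument; with the $|\beta_i|<\tfrac12$ claim corrected to $\beta_i\neq-\tfrac12$, the structural argument is also sound.
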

\begin{proof}
By the previous two lemmas, for non-generic $\Pi_v$ there is a regular pole on the line $\Re(s)=1/2$.
If $\Pi_v$ is generic, the Euler factor $L_{\mathrm{PSS}}(\Pi_v,s)$ has only regular poles \cite[4.3]{PS-L-Factor_GSp4} and these do not occur on the line $\Re(s)=1/2$ \cite{TB_L-factors}.
\end{proof}

\pagebreak

\section{Global genericity}
Let $F=\IQ$ with adele ring $\IA=\IR\times\IA_{\mathrm{fin}}$. In the following, suppose $\Pi = \Pi_\infty \otimes \Pi_{\mathrm{fin}}$ is a cuspidal automorphic irreducible representation of $\GSp(4,\IA)$, not CAP, with central character $\omega_\Pi$, such that $\Pi_\infty$ belongs to the discrete series. We want to show that if $\Pi$ is paramodular, then $\Pi_v$ is locally generic at every nonarchimedean place $v$.

Recall that the product $L_{\mathrm{PSS}}(\Pi,s)=\prod_v L_{\mathrm{PSS}}(\Pi_v,s)$ converges for $s$ in a right half plane and admits a meromorphic continuation to $\IC$ \cite[Thm.\,5.3]{PS-L-Factor_GSp4}. This is the global degree four spinor $L$-series of Piatetskii-Shapiro and Soudry. It satisfies a functional equation
\begin{equation}
 L_{\mathrm{PSS}}(\Pi,s)=\epsilon(\Pi,s)L_{\mathrm{PSS}}(\Pi^\vee,1-s)
\end{equation} where $\Pi^\vee\cong\Pi\otimes(\omega_\Pi^{-1}\circ \simi)$ is the contragredient. 

\begin{proposition}[Generalized Ramanujan]\label{weak_Ramanujan}
 The spherical local factors $\Pi_v$ of $\Pi_{\mathrm{fin}}$ are isomorphic to irreducible tempered principal series representations $\chi_1\times\chi_2\rtimes\sigma$ for  unramified unitary complex characters $\chi_1,\chi_2,\sigma$ of $\IQ_v$.
\end{proposition}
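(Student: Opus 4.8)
The plan is to reduce the assertion to the Ramanujan bound on Satake parameters and then invoke the known temperedness theorem for non-CAP cuspidal representations of $\GSp(4)$ whose archimedean component lies in the discrete series.

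First I would fix a nonarchimedean place $v$ at which $\Pi_v$ is unramified. By the Satake isomorphism $\Pi_v$ is the unique unramified subquotient of a Borel-induced representation $\chi_1\times\chi_2\rtimes\sigma$ with unramified characters $\chi_1,\chi_2,\sigma$ of $\IQ_v^\times$, determined up to the Weyl group action by the Satake parameter of $\Pi_v$. I would then recall two facts about such induced representations: unitary parabolic induction of a unitary character from the Borel is tempered, so $\chi_1\times\chi_2\rtimes\sigma$ is tempered as soon as $\chi_1,\chi_2,\sigma$ are unitary; and the reducibility points of $\chi_1\times\chi_2\rtimes\sigma$ all force one of $\chi_1,\chi_2,\chi_1\chi_2,\chi_1\chi_2^{-1}$ to equal $\nu^{\pm1}$, which is impossible for unitary $\chi_i$. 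Hence for unitary $\chi_1,\chi_2,\sigma$ the induced representation is irreducible, and it must then coincide with $\Pi_v$. So it suffices to prove that the Satake parameters of $\Pi_v$ have absolute value $1$, i.e.\ that $\Pi$ satisfies the Ramanujan conjecture at $v$.

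To obtain that bound I would use that $\Pi$ is not CAP and that $\Pi_\infty$ lies in the discrete series. A discrete series representation of $\GSp(4,\IR)$ is cohomological, so $\Pi$ contributes to the $\ell$-adic interior cohomology of a Siegel modular threefold of suitable level (fixing a nonzero vector in $\Pi_{\mathrm{fin}}$) with a suitable local system; because $\Pi$ is not CAP, this contribution lies in the part on which the attached four-dimensional spinor Galois representation $\rho_{\Pi,\ell}$ --- constructed by Taylor, Laumon and Weissauer --- occurs as a subquotient of the middle degree cohomology of a smooth projective variety and is therefore \emph{pure} of the relevant motivic weight $w$ by Deligne's purity theorem. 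For any $v=p\neq\ell$ at which $\Pi_p$ is unramified, local-global compatibility identifies $\rho_{\Pi,\ell}$ restricted to $W_{\IQ_p}$ with an unramified representation whose geometric Frobenius has characteristic polynomial the spinor Hecke polynomial of $\Pi_p$; purity then forces the four Frobenius eigenvalues to have absolute value $p^{w/2}$, which in the unitary normalization says exactly that the Satake parameters of $\Pi_p$ have absolute value $1$. Together with the previous paragraph this gives $\Pi_p\cong\chi_1\times\chi_2\rtimes\sigma$ with unramified unitary $\chi_1,\chi_2,\sigma$. In the endoscopic (Yoshida) case, where $\Pi$ transfers to an isobaric sum of two cuspidal representations of $\GL(2,\IA)$ attached to holomorphic newforms, the same bound follows more directly from Deligne's proof of the Ramanujan conjecture for $\GL(2)$.

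The main obstacle is the purity input: it requires both the construction of $\rho_{\Pi,\ell}$ and sufficient control of the boundary contributions to the cohomology of the non-compact Siegel threefold, and it is precisely the non-CAP hypothesis that excludes the degenerate Eisenstein and CAP pieces which would otherwise destroy purity. This is the substance of Weissauer's work on endoscopy for $\GSp(4)$, which I would cite for the Ramanujan conjecture in the exact form needed here; the reduction in the first paragraph and the bookkeeping with Satake parameters are routine by comparison.
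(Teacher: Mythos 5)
Your proposal is correct and, at bottom, is the same approach the paper takes: the paper's proof is a one‑line citation to Weissauer's Ramanujan theorem \cite[Thm.\,3.3]{Weissauer200903}, and your argument reconstructs precisely the content of that theorem (the reduction to Satake parameters, and then purity of the attached four‑dimensional spinor Galois representation via cohomology of Siegel threefolds, with the non‑CAP hypothesis excluding the pieces that would spoil purity) before pointing to the same reference. The only difference is that you supply the sketch the paper leaves implicit in the citation.
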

\begin{proof}
 See \cite[Thm.\,3.3]{Weissauer200903}.
\end{proof}

\begin{proposition}\label{prop:weak_lift_to_globally_generic}
$\Pi$ is weakly equivalent to a unique globally generic cuspidal automorphic irreducible representation 
$\Pi_{\mathrm{gen}}$ of $\GSp(4,\IA)$ whose archimedean local component $\Pi_{\mathrm{gen},\infty}$ is in the local archimedean $L$-packet of $\Pi_{\infty}$. The lift $\Pi\mapsto\Pi_{\mathrm{gen}}$ commutes with character twists by unitary idele class characters. The central characters of $\Pi_{\mathrm{gen}}$ and $\Pi$ coincide.
\end{proposition}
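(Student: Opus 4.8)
The plan is to invoke the endoscopic classification of the discrete automorphic spectrum of $\GSp(4)$ together with the construction of global Whittaker models in \cite{Weissauer200903}. First I would note that, since $\Pi$ is cuspidal, not CAP and $\Pi_\infty$ lies in the discrete series, its global parameter $\phi$ is elliptic and tempered — of general type (an irreducible four-dimensional symplectic parameter) or of Yoshida type (a sum of two distinct two-dimensional symplectic parameters) — which is consistent with Proposition~\ref{weak_Ramanujan}. Let $\Pi_\phi=\bigotimes_v\Pi_{\phi_v}$ be the associated global $L$-packet; each local packet $\Pi_{\phi_v}$ contains a unique generic constituent $\Pi_{\mathrm{gen},v}$, which at $v=\infty$ is the generic member of the discrete series $L$-packet of $\Pi_\infty$. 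Set $\Pi_{\mathrm{gen}}=\bigotimes_v\Pi_{\mathrm{gen},v}$. By construction $\Pi_{\mathrm{gen},\infty}$ lies in the archimedean $L$-packet of $\Pi_\infty$, and $\Pi_{\mathrm{gen}}$ is weakly equivalent to $\Pi$ since the two share the parameter $\phi$; in particular it is again not CAP.

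Next I would verify that $\Pi_{\mathrm{gen}}$ is automorphic, cuspidal and globally generic. In Arthur's multiplicity formula for $\Pi_\phi$ the generic constituent pairs trivially with the global component group $\mathcal{S}_\phi$, by the Whittaker normalization of the local Langlands correspondence for $\GSp(4)$; hence $\Pi_{\mathrm{gen}}$ occurs in the discrete spectrum with multiplicity one. It is cuspidal because a globally generic discrete automorphic representation of $\GSp(4)$ cannot be residual — the residual spectrum consists of one-dimensional and Saito--Kurokawa-type representations, which are non-generic. The genuine global genericity of $\Pi_{\mathrm{gen}}$, i.e.\ the non-vanishing of a global Whittaker integral on it, is precisely the substantive content of \cite{Weissauer200903}, obtained there via the theta correspondence.

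For uniqueness I would appeal to strong multiplicity one (rigidity) for globally generic cuspidal representations of $\GSp(4,\IA)$: under the functorial transfer attached to the standard embedding $\GSp(4,\IC)\hookrightarrow\GL(4,\IC)$ of dual groups, such a representation maps to an isobaric automorphic representation of $\GL(4,\IA)$, which by the rigidity theorem of Jacquet and Shalika is determined by almost all of its local components; since a tempered local $L$-packet has a unique generic member and it is determined by the local parameter, any two weakly equivalent globally generic cuspidal representations of $\GSp(4,\IA)$ are isomorphic at every place, hence isomorphic. Thus $\Pi_{\mathrm{gen}}$ is the only globally generic cuspidal representation in the weak equivalence class of $\Pi$, and it meets the prescribed archimedean condition.

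Finally, compatibility with twists and central characters is formal. For a unitary idele class character $\mu$, the twist $(\mu\circ\simi)\otimes\Pi$ is again cuspidal, not CAP, with archimedean component in the discrete series, and has parameter $\mu\otimes\phi$; since twisting by $\mu$ preserves genericity and the Whittaker normalization, it sends the distinguished member of $\Pi_\phi$ to that of $\Pi_{\mu\otimes\phi}$, giving $((\mu\circ\simi)\otimes\Pi)_{\mathrm{gen}}\cong(\mu\circ\simi)\otimes\Pi_{\mathrm{gen}}$; and all members of a $\GSp(4)$-packet share the central character fixed by $\phi$, so $\omega_{\Pi_{\mathrm{gen}}}=\omega_\Pi$. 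I expect the main obstacle to be the genericity statement itself — that the member of $\Pi_\phi$ obtained by selecting the generic constituent at every place really carries a global Whittaker functional and is cuspidal, rather than being a merely formal automorphic object — which rests on \cite{Weissauer200903} and the classification; the remaining points are bookkeeping with $L$-packets and functoriality.
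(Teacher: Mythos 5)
Your argument takes a genuinely different route from the paper. The paper simply invokes Weissauer's Theorem~1 of \cite{Weissauer_Schiermonnikoog} (whose proof in turn rests on Hypotheses~A and~B established in \cite{Weissauer200903}) for the existence and uniqueness of $\Pi_{\mathrm{gen}}$, then observes that commutation with twists is automatic from uniqueness, and that the central characters agree by strong multiplicity one for $\GL(1)$. You instead reconstruct the existence statement from Arthur-type ingredients: the decomposition of the discrete spectrum into tempered global packets, the Whittaker normalization of the local Langlands correspondence for $\GSp(4)$, the multiplicity formula (whereby the everywhere-generic member pairs trivially with $\mathcal S_\phi$ and so occurs with multiplicity one), and the fact that the $\GSp(4)$ residual spectrum is non-generic to obtain cuspidality. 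For uniqueness you route through the Asgari--Shahidi transfer to $\GL(4)$ plus Jacquet--Shalika rigidity rather than through Soudry/Jiang--Soudry strong multiplicity one on $\GSp(4)$ directly. Both approaches are sound in substance; the paper's citation is deliberately economical, and in fact was written before the full Arthur classification for $\GSp(4)$ (with multiplicity formula) had appeared in the literature, which is precisely why the paper rests on Weissauer's earlier construction. Your argument presupposes that classification (including the Whittaker normalization and the global multiplicity formula for $\GSp(4)$), which is a heavier hypothesis than what the paper uses, though now available.

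Two small points worth tightening. First, your uniqueness step proves uniqueness within the weak equivalence class without the archimedean constraint; note that this already forces any globally generic cuspidal representation in the class to have archimedean factor equal to the generic discrete-series member, so the constraint in the statement is vacuous for uniqueness — say so explicitly so the reader sees why the prescribed archimedean condition is compatible. Second, the assertion that global genericity ``is precisely the substantive content of \cite{Weissauer200903}, obtained there via the theta correspondence'' is imprecise as attribution: the existence theorem is \cite[Thm.\,1]{Weissauer_Schiermonnikoog}, with \cite{Weissauer200903} supplying the multiplicity hypotheses it needs. Neither of these affects correctness; they are matters of exposition and sourcing.
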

\begin{proof} See \cite[Thm.\,1]{Weissauer_Schiermonnikoog}; the proof relies on certain Hypotheses A and B shown in \cite{Weissauer200903}. The lift commutes with twists because $\Pi_{\mathrm {gen}}$ is unique. The central characters are weakly equivalent, so they coincide globally by strong multiplicity one for $\GL(1,\IA)$.
\end{proof}

\begin{proposition}\label{prop:mult_in_4D-reps} If $\Pi$ is not a weak endoscopic lift, then $\Pi_\infty$ is contained in an archimedean local $L$-packet $\{ \Pi_\infty^+ , \Pi_\infty^- \}$ such that the multiplicities
of $\Pi_\infty^+ \otimes \Pi_{\mathrm{fin}}$ and $\Pi_\infty^- \otimes \Pi_{\mathrm{fin}}$ in the cuspidal spectrum coincide.
\end{proposition}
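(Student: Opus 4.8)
The plan is to extract both multiplicities from the Hodge decomposition of the $\Pi_{\mathrm{fin}}$-isotypic part of the middle cohomology of a Siegel modular threefold, and then to force the two relevant Hodge numbers to agree by using that, for a $\Pi$ of this general type, the attached four-dimensional spinor Galois representation is irreducible with pairwise distinct Hodge--Tate weights.

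First I would fix a neat level $K$ and the local system $\mathcal{V}$ on the open Siegel threefold $S_K$ matching the infinitesimal character of $\Pi_\infty$, chosen so that $\Pi_{\mathrm{fin}}^K\neq 0$. The archimedean discrete series $L$-packet $\{\Pi_\infty^+,\Pi_\infty^-\}$ has exactly two members, and by the Vogan--Zuckerman computation of $(\mathfrak{g},K_\infty)$-cohomology the holomorphic-type member $\Pi_\infty^+$ contributes in degree three only in the Hodge bidegrees $(3,0)$ and $(0,3)$, whereas the generic member $\Pi_\infty^-$ contributes only in bidegrees $(2,1)$ and $(1,2)$. Combined with Zucker's conjecture (equivalently, with the analysis of interior cohomology, which suffices here since $\Pi$ is cuspidal), this shows that the $\Pi_{\mathrm{fin}}$-isotypic component of $H^3_{(2)}(S_K,\mathcal{V})$ is, as a polarizable Hodge structure and up to the evident twist by $\mathcal{V}$,
\[
  m^+\bigl(\IC^{(3,0)}\oplus\IC^{(0,3)}\bigr)\otimes\Pi_{\mathrm{fin}}^K\ \oplus\ m^-\bigl(\IC^{(2,1)}\oplus\IC^{(1,2)}\bigr)\otimes\Pi_{\mathrm{fin}}^K,
\]
where $m^\pm$ denotes the multiplicity of $\Pi_\infty^\pm\otimes\Pi_{\mathrm{fin}}$ in the cuspidal spectrum; in particular its top two Hodge numbers are $h^{3,0}=m^+\dim\Pi_{\mathrm{fin}}^K$ and $h^{2,1}=m^-\dim\Pi_{\mathrm{fin}}^K$.

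Next I would pass to the $\ell$-adic étale realization $H^3_{\mathrm{et}}(S_K,\mathcal{V})_{\Pi_{\mathrm{fin}}}$ of the same component. Every automorphic representation occurring in it has finite part $\Pi_{\mathrm{fin}}$, hence the Satake parameters of $\Pi$ at all unramified places, so the semisimplification of this Galois representation is a sum of copies of the four-dimensional spinor Galois representation $\rho_\Pi$, say $\rho_\Pi^{\oplus n}$. Here the hypotheses become decisive: since $\Pi$ is neither CAP nor a weak endoscopic lift, $\rho_\Pi$ is irreducible \cite{Weissauer200903}, and since $\Pi_\infty$ lies in the discrete series its infinitesimal character is regular, so the four Hodge--Tate weights of $\rho_\Pi$ are pairwise distinct. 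Consequently each of them occurs in $H^3_{\mathrm{et}}(S_K,\mathcal{V})_{\Pi_{\mathrm{fin}}}$ with multiplicity exactly $n$ (Hodge--Tate weights depending only on the semisimplification), the largest being carried by the $(3,0)$-part and the next by the $(2,1)$-part of the Hodge structure. Comparing with the second paragraph yields $m^+\dim\Pi_{\mathrm{fin}}^K = n = m^-\dim\Pi_{\mathrm{fin}}^K$, hence $m^+ = m^-$.

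The only genuinely hard point is the irreducibility of $\rho_\Pi$---equivalently, the statement that the near-equivalence class of $\Pi$ receives no endoscopic contribution in cohomology---which is exactly where ``not CAP'' and ``not a weak endoscopic lift'' are indispensable and which rests on Weissauer's stabilization of the trace formula for $\GSp(4)$. For a weak endoscopic lift $\rho_\Pi$ splits into two two-dimensional pieces whose Hodge--Tate weights need not interlace, so the $(3,0)$- and $(2,1)$-dimensions, and thus $m^+$ and $m^-$, may genuinely differ---the classical Yoshida lifts, which never contribute a holomorphic form, being the prototype. An equivalent formulation of the same input, entirely on the automorphic side, is that $m^+ + m^-$ is a stable invariant of the near-equivalence class of $\Pi$ while $m^+ - m^-$ is the contribution of the proper elliptic endoscopic groups of $\GSp(4)$, which vanishes precisely because $\Pi$ is not a weak endoscopic lift.
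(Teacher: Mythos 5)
Your route is genuinely different from the paper's one-line citation, and it is a good one: the paper reduces to multiplicity one for globally generic forms (Jiang--Soudry) plus Weissauer's Prop.~1.5 of the Ast\'erisque volume, whereas you unpack a cohomological argument comparing Hodge numbers on the $\Pi_{\mathrm{fin}}$-isotypic part of $H^3$ of a Siegel threefold against the Hodge--Tate multiplicities of the associated $\ell$-adic Galois module. That is exactly the circle of ideas behind the cited proposition, and your final paragraph (stable part versus endoscopic contribution of the trace formula) is the dual automorphic reading of the same fact.

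There is, however, one genuine logical gap in the way you order the steps. You first assert that because every constituent of the $\Pi_{\mathrm{fin}}$-isotypic component has the Satake parameters of $\Pi$, the semisimplification of the Galois module is automatically $\rho_\Pi^{\oplus n}$; only afterwards do you invoke irreducibility of $\rho_\Pi$. As written, that first implication is false, and it is exactly where ``not a weak endoscopic lift'' must be used. The Satake parameters only tell you (via Eichler--Shimura/Chebotarev) that the Jordan--H\"older constituents of the cohomological Galois module lie among the constituents of $\rho_\Pi$; they do \emph{not} tell you that the full four-dimensional $\rho_\Pi$ appears. For a Yoshida lift, $\rho_\Pi=\rho_1\oplus\rho_2$ splits, only one discrete series member contributes, the cohomology is two-dimensional and its semisimplification is $\rho_2$ alone --- not a multiple of $\rho_\Pi$ --- so your asserted conclusion $m^+=m^-$ would be wrong there, even though the Satake-parameter observation still holds verbatim. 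The fix is to move irreducibility up front: since $\rho_\Pi$ is irreducible (not CAP, not weak endoscopic), the only possible constituent is $\rho_\Pi$ itself, and \emph{only then} do you get $\rho_\Pi^{\oplus n}$. With that reordering the rest of the argument (regular infinitesimal character $\Rightarrow$ four distinct Hodge--Tate weights; comparison with the Hodge bidegrees giving $m^+=n=m^-$) goes through. I would also replace ``suffices here since $\Pi$ is cuspidal'' by ``since $\Pi$ is not CAP'' when justifying that the $\Pi_{\mathrm{fin}}$-isotypic part of $H^3$ is purely cuspidal; cuspidality of $\Pi$ alone does not preclude Eisenstein classes in the same Hecke eigensystem.
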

\begin{proof} By Prop.~\ref{prop:weak_lift_to_globally_generic}, $\Pi$ is weakly equivalent to a globally generic representation $\Pi'$ of $\GSp(4,\IA)$, which satisfies multiplicity one \cite{Jiang_Soudry_mult_one_GSp4}. Now \cite[Prop.\,1.5]{Weissauer_asterisque} implies the statement.\end{proof}

\begin{proposition}\label{prop:lift_gsp4_gl4}
Suppose $\Pi$ is globally generic. Then there is a unique globally generic automorphic irreducible representation $\tilde{\Pi}$ of $\GL(4,\IA)$ with partial Rankin-Selberg L-function
\begin{equation*}
 L^S(\tilde{\Pi},s)=L^S_{\mathrm{PSS}}(\Pi,s)
\end{equation*}
for a sufficiently large set $S$ of places. This lift is local in the sense that $\tilde{\Pi}_v$ only depends on $\Pi_v$. It commutes with character twists by unitary idele class characters.
\end{proposition}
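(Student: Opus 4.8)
The plan is to produce $\tilde{\Pi}$ as the generic functorial transfer of $\Pi$ along the standard four-dimensional representation $\GSp(4,\IC)\hookrightarrow\GL(4,\IC)$ of the dual group; under the exceptional isogeny $\GSp(4)\cong\mathrm{GSpin}(5)$ this is the spin transfer $\mathrm{GSpin}(5)\to\GL(4)$. For a globally generic cuspidal automorphic representation $\Pi$ of $\GSp(4,\IA)$ the existence of such a transfer is the theorem of Asgari and Shahidi on generic transfer for general spin groups, which I would quote wholesale. Its proof runs the Converse Theorem of Cogdell and Piatetski-Shapiro, whose hypotheses are furnished by the Langlands--Shahidi method: the twisted Rankin--Selberg $L$-functions $L(\Pi\times\tau,s)$ for unitary cuspidal $\tau$ on $\GL(m,\IA)$ with $m\le 2$ are entire away from their obvious poles, bounded in vertical strips, and satisfy the expected functional equation, while stability of the local $\gamma$-factors under highly ramified twists serves to identify the local components at the ramified places. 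The output is an automorphic representation $\tilde{\Pi}$ of $\GL(4,\IA)$ — an isobaric sum of unitary cuspidal representations of length at most two, with length two precisely when $\Pi$ is a weak endoscopic lift — which is the local Langlands functorial lift of $\Pi$ at every place. In particular $\tilde{\Pi}_v$ depends only on $\Pi_v$, and at every unramified place $v$ the Satake parameter of $\tilde{\Pi}_v$ is the image of that of $\Pi_v$ under $\GSp(4,\IC)\hookrightarrow\GL(4,\IC)$; hence $L(\tilde{\Pi}_v,s)=L_{\mathrm{PSS}}(\Pi_v,s)$ at all unramified $v$, and $L^S(\tilde{\Pi},s)=L^S_{\mathrm{PSS}}(\Pi,s)$ for any $S$ containing the archimedean and the ramified places. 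Global genericity of $\tilde{\Pi}$ is then immediate, since an isobaric sum of generic (in particular cuspidal) unitary representations of $\GL$ carries a nonzero Whittaker--Fourier functional.

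For uniqueness I would argue by strong multiplicity one. Suppose $\tilde{\Pi}'$ is another globally generic automorphic irreducible representation of $\GL(4,\IA)$ with $L^{S'}(\tilde{\Pi}',s)=L^{S'}_{\mathrm{PSS}}(\Pi,s)$ for some sufficiently large finite $S'$. For every $v\notin S\cup S'$ both $\tilde{\Pi}_v$ and $\tilde{\Pi}'_v$ are unramified and have the same standard $L$-factor, hence the same Satake parameter, hence are isomorphic. A globally generic irreducible automorphic representation of $\GL(4,\IA)$ is isobaric, so strong multiplicity one for isobaric automorphic representations (Jacquet--Shalika) yields $\tilde{\Pi}'\cong\tilde{\Pi}$; in particular $\tilde{\Pi}$ does not depend on the auxiliary set $S$, and $\tilde{\Pi}_v\cong\tilde{\Pi}'_v$ at all places.

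Compatibility with twists is then formal. Let $\mu$ be a unitary idele class character of $\IQ$. Twisting by $\mu\circ\simi$ preserves cuspidality, unitarity, and the global Whittaker model, so $(\mu\circ\simi)\otimes\Pi$ is again a globally generic cuspidal automorphic irreducible representation of $\GSp(4,\IA)$ and thus admits a transfer $\widetilde{(\mu\circ\simi)\otimes\Pi}$. At a place $v$ where everything is unramified, writing $\Pi_v=\chi_1\times\chi_2\rtimes\sigma$ one has $(\mu_v\circ\simi)\otimes\Pi_v=\chi_1\times\chi_2\rtimes(\mu_v\sigma)$, whose spin $L$-factor is obtained from that of $\Pi_v$ by twisting each of the four spin characters by $\mu_v$; equivalently, the local lift of $(\mu_v\circ\simi)\otimes\Pi_v$ is $(\mu_v\circ\det)\otimes\tilde{\Pi}_v$, and therefore $L^{S}_{\mathrm{PSS}}((\mu\circ\simi)\otimes\Pi,s)=L^{S}((\mu\circ\det)\otimes\tilde{\Pi},s)$. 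Since $(\mu\circ\det)\otimes\tilde{\Pi}$ is globally generic automorphic on $\GL(4,\IA)$, the uniqueness just proved forces $\widetilde{(\mu\circ\simi)\otimes\Pi}\cong(\mu\circ\det)\otimes\tilde{\Pi}$, which is the asserted commutation with character twists.

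The main obstacle is the existence of the transfer, which I treat as a black box: its proof demands the full strength of the Langlands--Shahidi theory for $\GSp(4)\times\GL(m)$, $m\le 2$, together with the local stability analysis needed to pin down $\tilde{\Pi}_v$ at the ramified places. By contrast, the remaining steps carried out here — that an isobaric lift is irreducible and globally generic, that strong multiplicity one gives uniqueness, and the unramified Satake computation underlying twist-compatibility — are routine.
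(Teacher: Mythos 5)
Your proposal is correct and follows essentially the same route as the paper: existence and locality come from the Asgari--Shahidi generic transfer, uniqueness from strong multiplicity one for $\GL(4)$, and compatibility with twists from the unramified local Satake computation combined again with strong multiplicity one. The added exposition about the converse-theorem/Langlands--Shahidi machinery behind Asgari--Shahidi and about the isobaric structure of $\tilde{\Pi}$ does not change the logical structure of the argument.
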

\begin{proof}
For the existence and locality of the lift, see Asgari and Shahidi \cite{Asgari_Shahidi_GSp4_GL4}; uniqueness follows from strong multiplicity one for $\GL(4)$. It remains to be shown that $\Pi\mapsto\tilde{\Pi}$ commutes with character twists. Indeed, by Prop.~\ref{weak_Ramanujan}, almost every local factor is of the form $\Pi_v\cong\chi_1\times\chi_2\rtimes\sigma$ with unitary unramified characters $\chi_1,\chi_2,\sigma$. Its local lift $\tilde{\Pi}_v$ is the parabolically induced $\GL(4,\IA)$-representation
\begin{equation*}
\tilde{\Pi}_v\ \cong\ \chi_1\chi_2\sigma\times\chi_1\sigma\times\chi_2\sigma\times\sigma\ ,
\end{equation*}
\cite[Prop.\,2.5]{Asgari_Shahidi_GSp4_GL4}. Therefore, the lift $\Pi_v\mapsto\tilde{\Pi}_v$ commutes with local character twists at almost every place. Strong multiplicity one for $\GL(4)$ implies the statement.
\end{proof}

\begin{theorem}\label{thm:stable_para_implies_glob_generic} Suppose $\Pi=\bigotimes_{v}\Pi_v$  is a paramodular unitary cuspidal irreducible automorphic representation of $\GSp(4,\IA_{\IQ})$ that is not CAP nor weak endoscopic. We assume that $\Pi_\infty$ is in the discrete series, that the local factor $\Pi_2$ is spherical and that Hypothesis~\ref{conj:non-vanishing-L-value} holds.
Then $\Pi_v$ is locally generic at all nonarchimedean places $v$.
\end{theorem}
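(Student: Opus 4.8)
The plan is to argue by contradiction: suppose $\Pi_{v_0}$ is non-generic at some nonarchimedean place $v_0$, and derive a violation of the functional equation through a carefully chosen twist. The engine is the interplay between Proposition~\ref{prop:local_params_have_pole_at_one_half} (paramodular non-generic $\Leftrightarrow$ a pole of $L_{\mathrm{PSS}}(\Pi_v,s)$ on $\Re(s)=1/2$) and the non-vanishing Hypothesis~\ref{conj:non-vanishing-L-value} applied to the globally generic lift $\Pi_{\mathrm{gen}}$ supplied by Proposition~\ref{prop:weak_lift_to_globally_generic}. Since $\Pi$ is neither CAP nor weak endoscopic, $\Pi_{\mathrm{gen}}$ is weakly equivalent to $\Pi$, so $L^S_{\mathrm{PSS}}(\Pi,s) = L^S_{\mathrm{PSS}}(\Pi_{\mathrm{gen}},s) = L^S(\tilde\Pi,s)$ for the $\GL(4)$-lift $\tilde\Pi$ of Proposition~\ref{prop:lift_gsp4_gl4}, and all three lifts commute with unitary idele class character twists.

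First I would twist: for a unitary idele class character $\mu$, the paramodular representation $\mu\otimes\Pi$ (more precisely $(\mu\circ\simi)\otimes\Pi$) is again paramodular — the paramodular group controls $\simi$ modulo $\mathfrak{o}^\times_{\mathrm{fin}}$, cf.\ Lemma~\ref{lem:param_trivial_cent_char_up_to_unramif_twist} — and still not CAP, not weak endoscopic, with $\Pi_\infty$ in the discrete series; choosing $\mu$ unramified at $v_0$ and at $2$ preserves the hypotheses of the theorem and keeps $\Pi_{v_0}$ in the same Sally–Tadic class, hence still non-generic with a pole of $L_{\mathrm{PSS}}((\mu\Pi)_{v_0},s)$ at some point of $\Re(s)=1/2$. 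The completed spinor $L$-function $L_{\mathrm{PSS}}(\mu\Pi,s)$ therefore has a pole somewhere on $\Re(s)=1/2$ for every such $\mu$. On the other hand, $L_{\mathrm{PSS}}(\mu\Pi,s)$ agrees up to finitely many (nonzero, pole-free on $\Re(s)=1/2$) Euler factors with the Novodvorsky $L$-function $L_{\mathrm{Nvd}}(\Pi_{\mathrm{gen}},\mu,s)$ by Lemma~\ref{Lem:Factors_of_PS_Nov_equiv} — using that the residue characteristic is odd at the relevant places — and hence with $L(\mu\circ\det \otimes\,\tilde\Pi, s)$, a $\GL(4)$ Rankin–Selberg $L$-function of a generic cuspidal (or isobaric) representation, which is entire. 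The pole of the $\GSp(4)$-object on $\Re(s)=1/2$ must then be cancelled by a zero of the corresponding $\GL(4)$ completed $L$-function at the \emph{same} point; tracking which local factor produces the pole (it is always a Tate factor $L(s,\nu^{-1/2}\sigma)$ or $L(s,\nu^{-1/2}\sigma\chi)$ with $\sigma,\sigma\chi$ \emph{unramified} — this is the content of Lemma~\ref{lem:reg_poles}), the pole of $L_{\mathrm{PSS}}((\mu\Pi)_{v_0},s)$ sits at $s=1/2 + i t_0$ for a value $t_0$ that, as $\mu$ ranges over characters with prescribed ramification at $v_0$ and $2$, runs through a full arithmetic progression $\alpha+k\beta$ with $\beta>0$ (coming from the logarithm of $q_{v_0}$ against the conductor/ramification of $\mu$ at the auxiliary place). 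This is exactly the shape of Hypothesis~\ref{conj:non-vanishing-L-value}: it furnishes a unitary $\mu$, trivial at $v_0$ (and we may also demand triviality at $2$), such that $L_{\mathrm{Nvd}}(\Pi_{\mathrm{gen}},\mu,s)$, hence $L_{\mathrm{PSS}}(\mu\Pi,s)$, does \emph{not} vanish at the point $s=1/2+i(\alpha+k\beta)$ where the local pole sits. That contradicts entirety of the $\GL(4)$ $L$-function — the pole cannot be cancelled — so $\Pi_{v_0}$ must have been generic after all.

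The step I expect to be the main obstacle is the bookkeeping that aligns the location of the local pole with the arithmetic progression in the hypothesis, and the verification that one can simultaneously (i) keep $\Pi_2$ spherical — this is why the spherical hypothesis at $2$ is imposed, so that the odd-residue-characteristic Lemma~\ref{Lem:Factors_of_PS_Nov_equiv} and the odd-level results of Danisman apply uniformly, and $v_0 \neq 2$ causes no trouble — (ii) leave the Sally–Tadic class of $\Pi_{v_0}$ unchanged under the twist so that the pole persists, and (iii) realize every shift $\alpha + k\beta$ needed. The progression parameter $\beta$ should be taken as $(\log q_{v_0})/(2\pi)$ times a rational coming from the order of the cyclic quotient of $\mathfrak{o}^\times_{v_0}$ cut out by the conductor at the auxiliary prime; pinning down $\alpha$ requires reading off the unramified twist $\omega'_v$ of Lemma~\ref{lem:param_trivial_cent_char_up_to_unramif_twist} that normalizes the central character, since that twist is exactly what moves the Tate pole off the real axis. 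Once the progression is matched, the hypothesis does the rest, and the contradiction closes the proof.
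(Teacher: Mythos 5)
Your proposal is aimed at the same target as the paper's proof — the local pole at the non-generic place $w$ of odd residue characteristic, multiplied to an arithmetic progression $s_k=\tfrac12+i(\alpha+k\beta)$, forces zeros of the global generic spinor $L$-function at all $s_k$, and Hypothesis~\ref{conj:non-vanishing-L-value} then supplies a twist $\mu$ with $\mu_w=1$ producing a non-vanishing value, a contradiction. But there are three concrete problems with the way you execute it.

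First, the claim that ``the completed spinor $L$-function $L_{\mathrm{PSS}}(\mu\Pi,s)$ therefore has a pole somewhere on $\Re(s)=1/2$'' is false: for $\mu\Pi$ not CAP this $L$-function is holomorphic, which is exactly what the paper exploits. The paper writes the identity $L_{\mathrm{PSS}}(\Pi',s)/L_{\mathrm{PSS}}(\Pi_{\mathrm{gen}},s)=\prod_{v\in S}L_{\mathrm{PSS}}(\Pi_v,s)/L_{\mathrm{PSS}}(\Pi_{\mathrm{gen},v},s)$, observes that the right-hand side has poles at all $s_k$ (the numerator at $v=w$ contributes the pole; the generic local factors of $\Pi_{\mathrm{gen},v}$ downstairs are pole-free on $\Re(s)=1/2$ and never vanish), and since the numerator $L_{\mathrm{PSS}}(\Pi',s)$ is holomorphic this forces $L_{\mathrm{PSS}}(\Pi_{\mathrm{gen}},s_k)=0$. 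The pole lives in the local factor, not in the completed $L$-function. Your later remark that the pole ``must be cancelled by a zero'' of the $\GL(4)$ $L$-function is the right idea, but the sentence preceding it misstates where the pole sits and makes the logical chain incoherent. (Routing through the $\GL(4)$ lift $\tilde\Pi$ of Prop.~\ref{prop:lift_gsp4_gl4} and its entirety is a workable alternative to the paper's comparison with $L_{\mathrm{PSS}}(\Pi',s)$, but you then owe the reader the cuspidality of $\tilde\Pi$ for non-CAP non-endoscopic $\Pi$, since an isobaric $L$-function need not be entire.)

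Second, your bookkeeping of $\alpha$ and $\beta$ is wrong. You write $\beta=(\log q_{v_0})/(2\pi)$ ``times a rational coming from the order of the cyclic quotient of $\mathfrak{o}^\times_{v_0}$ cut out by the conductor at the auxiliary prime.'' There is no auxiliary prime and no cyclic quotient in this step: the period $\beta$ is simply $2\pi/\ln p_w$, the periodicity of $p_w^{-s}$, i.e.\ the full arithmetic progression of poles of the single Tate factor $L(s,\nu^{-1/2}\sigma)$ at $w$ identified by Lemma~\ref{lem:reg_poles}; and $\alpha=\theta/\ln p_w$ where $\sigma(\varpi_w)=e^{i\theta}$. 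Your formula is the reciprocal of the correct one, and the extra rational factor has no counterpart in the argument. Similarly, ``pinning down $\alpha$ requires reading off the unramified twist $\omega'_v$'' is over-engineered; $\alpha$ is just the imaginary part of the pole of the explicit local Tate factor.

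Third, you ask for $\mu$ to be trivial both at $v_0$ and at $2$, but Hypothesis~\ref{conj:non-vanishing-L-value} only allows prescribing a single non-archimedean place. The paper needs only $\mu_w=1$; the place $2$ takes care of itself because $\Pi_2$ is spherical, hence generic by Prop.~\ref{weak_Ramanujan}, and genericity is preserved under any character twist, so $(\mu\Pi)_2$ contributes no pole on $\Re(s)=1/2$ regardless of the ramification of $\mu_2$. Demanding a stronger hypothesis than is actually available is a genuine gap.

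In short: the strategy is recognizable as the paper's, but as written the proposal misstates where the pole lives, garbles the parameters of the arithmetic progression, and overreaches what the Hypothesis grants. The paper's version — comparing $L_{\mathrm{PSS}}(\Pi',s)$ (holomorphic since $\Pi'$ is not CAP) against $L_{\mathrm{PSS}}(\Pi_{\mathrm{gen}},s)$ via the finite local ratio \eqref{eq:L-factor_local_global}, and then rerunning the argument for $(\mu\circ\simi)\otimes\Pi$ with $\mu_w=1$ — is both cleaner and avoids all three pitfalls.
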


\begin{proof} Denote by $\Pi_\infty^W$ the generic constituent of the archimedean $L$-packet attached to $\Pi_\infty$.
By Prop.\,\ref{prop:mult_in_4D-reps} the irreducible representation $\Pi'=\Pi_\infty^W\otimes\Pi_\mathrm{fin}$ is cuspidal automorphic again.
By Prop.\,\ref{prop:weak_lift_to_globally_generic} there is a globally generic cuspidal automorphic irreducible representation $\Pi_{\mathrm{gen}}$, weakly equivalent to $\Pi$, such that $\Pi_{\mathrm{gen},\infty}$ is isomorphic to $\Pi_\infty^W$.

Since $\Pi'$ and $\Pi_{\mathrm{gen}}$ are weakly equivalent, there is a finite set $S$ of non-archimedean places such that $\Pi_{\mathrm{gen},v}$ is isomorphic to $\Pi_{v}$ for every place outside of $S$. This implies
\begin{equation}\label{eq:L-factor_local_global}
\frac{L_{\mathrm{PSS}}(\Pi',s)}{L_{\mathrm{PSS}}(\Pi_{\mathrm{gen}},s)}=\prod_{v\in S}\frac{L_{\mathrm{PSS}}(\Pi_v,s)}{L_{\mathrm{PSS}}(\Pi_{\mathrm{gen},v},s)}\ .
\end{equation}

At the place $v=2$, the local factor $\Pi_2$ is spherical by assumption and therefore locally generic by Prop.\,\ref{weak_Ramanujan}.
Now suppose there is at least one non-archimedean place $w$ of odd residue characteristic where the unitary local irreducible admissible representation $\Pi_w$ is non-generic. By Prop.\,\ref{prop:local_params_have_pole_at_one_half}, the right hand side of \eqref{eq:L-factor_local_global} must have an arithmetic progression $(s_k)_{k\in\IZ}$ of poles $s_k=1/2+i(\alpha+k\beta)$ with $\beta=2\pi/\ln (p_w)$ and some real $\alpha$.
Since $\Pi'$ is not CAP, $L_{\mathrm{PSS}}(\Pi',s)$ is holomorphic and this implies $L_{\mathrm{PSS}}(\Pi_{\mathrm{gen}},s_k)=0$ for every $k$ by \eqref{eq:L-factor_local_global}.
Hence, the partial $L$-function $L^S(\Pi,s_k)$ vanishes at every $s_k$ for sufficiently large $S$.

If Hypothesis~\ref{conj:non-vanishing-L-value} is true, there is a unitary idele class character $\mu$ of $\IQ^\times\backslash \IA^\times$ with $\mu_w = 1$
and there is some $k\in\IZ$ with $L^S(\Pi,\mu,s_k)\neq0$ for the partial $L$-factor outside of $S$.
Since $(\mu\circ\simi)\otimes\Pi$ is again cuspidal automorphic, the above argument gives $L^S(\Pi,\mu,s_k)=0$ for every $k$. This is a contradiction.

Therefore $\Pi$ is locally generic at every non-archimedean place $v$.
\end{proof}
Conversely, a generic non-archimedean $\Pi_v$ is always paramodular \cite{Roberts-Schmidt}.
In the situation of the theorem, $\Pi'$ is locally generic everywhere, so a result of Jiang and Soudry \cite[Corollary]{Jiang_Soudry_mult_one_GSp4} implies
\begin{equation}\label{eq:non-CAP-non-endo_param_is_glob_gen} \Pi'\cong\Pi_{\mathrm{gen}}. \end{equation}

\subsection{Weak endoscopic lift}
A cuspidal automorphic irreducible representation $\Pi$ of $\GSp(4,\IA)$, not CAP, is a weak endoscopic lift if there is a pair of cuspidal automorphic irreducible representations $\sigma_1,\;\sigma_2$ of $\GL(2,\IA)$ with the same central character, and local spinor $L$-factor
\begin{equation*} L_{\mathrm{PSS}}(\Pi_v,s)=L(\sigma_{1,v},s)L(\sigma_{2,v},s)\,.\end{equation*}
at almost every place \cite[\S5.2]{Weissauer200903}.
\begin{proposition}\label{prop:paramod_global_endo_is_generic}
Suppose a paramodular cuspidal irreducible automorphic representation $\Pi\cong\otimes_v\Pi_v$ of $\GSp(4,\IA_\IQ)$ is a weak endoscopic lift with local archimedean factor $\Pi_\infty$ in the discrete series. Then $\Pi$ is globally generic.
\end{proposition}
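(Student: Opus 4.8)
The plan is to locate $\Pi$ inside the global packet attached to the endoscopic datum $(\sigma_1,\sigma_2)$ and to use Lemma~\ref{RS-temps_are_param_iff_gen} to rule out its non-generic members. First I would invoke Weissauer's classification of weak endoscopic lifts \cite[\S5.2]{Weissauer200903}, \cite{Weissauer_asterisque}. Since $\Pi$ is cuspidal and not CAP, necessarily $\sigma_1\not\cong\sigma_2$, and the cuspidal automorphic representations weakly equivalent to $\Pi$ are exactly the representations $\otimes_v\pi_v^{\epsilon_v}$, where at each place $\{\pi_v^+,\pi_v^-\}$ is the local $L$-packet attached to $(\sigma_{1,v},\sigma_{2,v})$ --- a singleton $\{\pi_v^+\}$ unless $\sigma_{1,v}$ and $\sigma_{2,v}$ are both square-integrable, in which case $\pi_v^+$ is generic and $\pi_v^-$ is non-generic --- the signs satisfy $\epsilon_v=+$ for almost all $v$, and $\otimes_v\pi_v^{\epsilon_v}$ is automorphic precisely when $\prod_v\epsilon_v=+$. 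In particular $\Pi\cong\otimes_v\pi_v^{\epsilon_v}$ for one such family of signs, and the globally generic representation $\Pi_{\mathrm{gen}}$ of Prop.~\ref{prop:weak_lift_to_globally_generic} lies in the same packet; being locally generic at every place it must equal $\otimes_v\pi_v^+$.

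Next I would pin down the archimedean data: since $\Pi_\infty$ lies in the discrete series and the endoscopic datum determines its $L$-parameter as the direct sum of those of $\sigma_{1,\infty}$ and $\sigma_{2,\infty}$, both $\sigma_{i,\infty}$ must be discrete series representations of $\GL(2,\IR)$. Hence $\sigma_1,\sigma_2$ are, up to twist, attached to holomorphic newforms of weight $\ge2$, so by Deligne's bound they are tempered at every finite place, and the local packets $\{\pi_v^+,\pi_v^-\}$ consist of tempered representations at all finite $v$. Now I would use paramodularity to eliminate the minus signs at finite places. Suppose $\epsilon_w=-$ at some non-archimedean $w$; then $\Pi_w\cong\pi_w^-$ is tempered and non-generic. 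On the other hand $\Pi$ is paramodular, so $\Pi_w$ admits non-zero paramodular invariants and, by Lemma~\ref{lem:param_trivial_cent_char_up_to_unramif_twist}, has unramified central character --- whence Lemma~\ref{RS-temps_are_param_iff_gen} forces $\Pi_w$ to be generic, a contradiction. Therefore $\epsilon_v=+$ at every finite place, and the parity constraint $\prod_v\epsilon_v=+$ then forces $\epsilon_\infty=+$ as well. Consequently $\Pi\cong\otimes_v\pi_v^+\cong\Pi_{\mathrm{gen}}$, which is globally generic.

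I expect the main obstacle to be having the precise structure of the weak endoscopic packet at hand together with the correct multiplicity formula --- in particular that the global parameter is tempered, so that its sign character is trivial, the generic member $\otimes_v\pi_v^+$ is automorphic, and the parity condition reads $\prod_v\epsilon_v=+$ rather than $-$. One also needs the non-generic members $\pi_v^-$ to be genuinely tempered, and this is exactly where the discrete series hypothesis on $\Pi_\infty$ is used, via Deligne's temperedness bound for holomorphic cusp forms on $\GL(2)$.
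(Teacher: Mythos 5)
Your proof is correct and follows essentially the same route as the paper: Deligne's bound gives temperedness of $\sigma_1,\sigma_2$ and hence of every local factor $\Pi_v$, Lemma~\ref{RS-temps_are_param_iff_gen} combined with paramodularity forces local genericity at all finite places, and the multiplicity formula for the weak endoscopic packet then forces the archimedean sign to be $+$ as well. The only difference is presentational: the paper delegates the packet-structure and sign-parity argument to \cite[Thm.\,5.2, Thm.\,4.1]{Weissauer200903}, whereas you have unpacked it explicitly in terms of the local sign data $\epsilon_v$ and the condition $\prod_v\epsilon_v=+$.
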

\begin{proof}
The automorphic representations $\sigma_1$ and $\sigma_2$ are locally tempered (Deligne), so the local endoscopic lift $\Pi_v$ is also tempered \cite[\S4.11]{Weissauer200903}.
At every non-archimedean place $v$ the local representation $\Pi_v$ is paramodular, hence locally generic by Lemma \ref{RS-temps_are_param_iff_gen}.
The archimedean factor $\Pi_\infty$ is also generic\cite[Thm.\,5.2]{Weissauer200903}. Hence $\Pi$ is globally generic \cite[Thm.\,4.1]{Weissauer200903}, \cite{Jiang_Soudry_mult_one_GSp4}.
\end{proof}

\section{Multiplicity one and strong multiplicity one}
We show the multiplicity one theorem and the strong multiplicity one theorem for paramodular cuspidal automorphic representations of $\GSp(4,\IA_\IQ)$ under certain restrictions. 
It is well-known that strong multiplicity one fails without the paramodularity assumption \cite{Howe_Piatetskii}, \cite{File_Strong_Mult_One}.
\begin{lemma}\label{lem:Soudry_lifts_not_paramodular} A cuspidal automorphic irreducible representation $\Pi$ of $\GSp(4,\IA_\IQ)$, that is CAP and strongly associated to the Klingen parabolic subgroup, is never paramodular.
\end{lemma}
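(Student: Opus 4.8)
The plan is to combine the classification of CAP representations strongly associated to the Klingen parabolic (the Saito–Kurokawa case is the Siegel parabolic; the Klingen case is the one produced by Soudry's lifting construction, also called the ``$P$-CAP'' or Soudry representations) with the list of paramodular non-generic local representations in Lemma \ref{lem:para}. Concretely, if $\Pi$ is CAP and strongly associated to the Klingen parabolic $\mathbf{\Klingen}$, then by definition $\Pi$ is weakly equivalent to a constituent of a representation parabolically induced from a cuspidal representation $\tau\otimes\chi$ of the Levi quotient $\GL(2)\times\GL(1)$ of $\mathbf{\Klingen}$, where $\tau$ is a cuspidal automorphic representation of $\GL(2,\IA)$. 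At almost every place $v$ the local component $\Pi_v$ is then the Langlands quotient of the corresponding local induced representation; since $\tau$ is cuspidal (hence $\tau_v$ tempered at almost all $v$ by Deligne), this local Langlands quotient is a non-tempered, non-generic representation of type $\mathrm{VIb}$ or $\mathrm{XIa}$ in the Sally–Tadic/Roberts–Schmidt classification — crucially \emph{not} one of the types $\mathrm{IIb}$, $\mathrm{IIIb}$, $\mathrm{IVd}$, $\mathrm{Vb,c}$, $\mathrm{Vd}$, $\mathrm{VIc}$, $\mathrm{VId}$, $\mathrm{XIb}$ listed in Lemma \ref{lem:para} as the paramodular non-generic ones.

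The key steps, in order, are: first, recall the precise shape of a Klingen-type CAP representation — it arises from a cuspidal $\tau$ on $\GL(2,\IA)$ via the analogue of the Saito–Kurokawa construction attached to $\mathbf{\Klingen}$, so that at unramified places $\Pi_v$ is the Langlands quotient $L(\nu^{1/2}\tau_v,\nu^{-1/2}\sigma_v)$ for appropriate unitary data (type $\mathrm{XIa}$ when $\tau_v$ is supercuspidal-free, or a subtype of $\mathrm{VI}$ when $\tau_v$ is an unramified principal series). Second, invoke Prop.\,\ref{prop:local_params_have_pole_at_one_half}: if $\Pi$ were paramodular, then $\Pi_v$ would be paramodular at every non-archimedean $v$, and being non-generic (which it is, at almost every place, by the previous step) it would have to appear in the Lemma \ref{lem:para} list. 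Third, observe that the generic member $\mathrm{XIb}$ of the relevant $L$-packet is $L(\nu^{1/2}\pi,\nu^{-1/2}\sigma)$ with $\pi$ \emph{cuspidal} on $\GL(2,F_v)$ and $\sigma$ unramified — but the non-tempered Langlands quotient that actually occurs in a Klingen-CAP representation is the \emph{other} constituent, namely $\mathrm{XIa}$ (or $\mathrm{VIb}$), and these are precisely absent from the paramodular list. This contradiction shows $\Pi$ cannot be paramodular. Alternatively, and perhaps more cleanly, one can argue via $L$-factors: for a Klingen-CAP $\Pi$ the partial spinor $L$-series $L^S_{\mathrm{PSS}}(\Pi,s)$ has a pole, coming from the $L(\nu^{\pm1/2}\sigma,s)$ factors of the local Langlands quotients, and this forces a pole of some $L_{\mathrm{PSS}}(\Pi_v,s)$ off the line $\Re(s)=1/2$ (at $s=0$ or $s=1$) in a way that is incompatible with the paramodular non-generic classification of Lemma \ref{lem:para}, where all such poles sit on $\Re(s)=1/2$.

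The main obstacle I expect is pinning down exactly which Sally–Tadic type the local components of a Klingen-CAP representation fall into, and verifying that this type is genuinely excluded from Lemma \ref{lem:para} rather than merely being one whose generic cousin is included. This requires being careful about which constituent of the reducible induced representation $\nu^{1/2}\tau_v\rtimes\nu^{-1/2}\sigma_v$ is the automorphic local component — the Langlands quotient, not the generic sub/quotient — and confirming via the Roberts–Schmidt paramodular tables (their Tables A.2 and A.12, as cited in the proof of Lemma \ref{lem:para}) that type $\mathrm{XIa}$ (respectively $\mathrm{VIb}$) has no paramodular vectors. Once that bookkeeping is in place the contradiction is immediate, so the proof is short; it is essentially a lookup against the classification already assembled in this section, applied to the known automorphic shape of Soudry's Klingen lift.
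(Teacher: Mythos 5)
Your proposal takes a genuinely different route from the paper, and as stated it has a gap. The paper's proof does \emph{not} proceed by classifying the Sally--Tadic type of the local components and comparing against Lemma~\ref{lem:para}. Instead it uses Soudry's theta-lift description directly: a Klingen-CAP representation is $\Pi=\theta(\sigma)$ for $\sigma$ on $\GO_T(\IA_\IQ)$ with $T$ an \emph{anisotropic} binary quadratic space; writing $T\cong(K,t\cdot\mathrm{N}_K)$ for the quadratic field $K=\IQ(\sqrt{-\mathrm{d}_T})$, one picks a non-archimedean place $v$ that \emph{ramifies} in $K$ and shows by an explicit computation that the local Weil representation of $\GO_T(\IQ_v)\times\GSp(4,\IQ_v)$ on $\mathcal{S}(K_w\times K_w\times\IQ_v^\times)$ has no nonzero paramodular invariants. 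Since paramodular groups are compact, taking invariants is exact, so $\Pi_v$ (a quotient of the Weil representation) also has none.

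The gap in your plan is that the ``almost every place'' reasoning does not locate the obstruction. At places where everything (including the CAP datum $\tau$) is unramified, $\Pi_v$ is spherical and hence trivially paramodular -- it then lies in groups IIb/IIIb/IVd/Vd/VId with unramified characters, all of which \emph{do} appear in Lemma~\ref{lem:para}. So no contradiction arises from the unramified places, and your claim that $\Pi_v$ is of type XIa or VIb at almost all places cannot be right if $\Pi$ were paramodular. The contradiction, if one wants to extract it via classification, must come from a \emph{specific} place determined by the arithmetic of $T$ -- namely a place ramified in $K$, where the local theta lift is non-spherical and its type (with a \emph{ramified} quadratic character $\xi$) has to be matched against Roberts--Schmidt's tables, which is exactly the bookkeeping you flagged as a potential obstacle and which the paper avoids by working with the Weil representation directly. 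Your alternative $L$-function argument is also not sound as stated: Lemma~\ref{lem:reg_poles} and Prop.~\ref{prop:local_params_have_pole_at_one_half} only locate regular poles \emph{on} the line $\Re(s)=1/2$ for paramodular non-generic $\Pi_v$; they do not assert that no poles occur off that line, so a pole of $L^S_{\mathrm{PSS}}(\Pi,s)$ at $s=1$ does not by itself contradict paramodularity. To repair your plan you would need to (a) invoke the anisotropic theta-lift structure to produce the distinguished ramified place $v$, and (b) pin down the exact Sally--Tadic type of $\Pi_v$ there and check it against the actual entries of Roberts--Schmidt Table A.12 (being careful about the ramification of $\xi$, not just of $\sigma$) -- at which point you have essentially reproduced the content of the paper's Weil-representation calculation by a more roundabout route.
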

\begin{proof}
Every such representation is a theta lift $\Pi=\theta(\sigma)$ of an automorphic representation $\sigma$ of $\GO_T(\IA_\IQ)$ for an anisotropic binary quadratic space $T$ over $\IQ$, see Soudry \cite{Soudry-Lift}.
Let $\mathrm{d}_T$ be the discriminant of $T$, then $T$ is isomorphic to $(K,t\cdot\mathrm{N}_K)$ for the norm $\mathrm{N}_K$ of the quadratic field $K=\IQ(\sqrt{-\mathrm{d}_T})$ and a squarefree integer $t$.
For a non-archimedean place $v$ of $\IQ$ that ramifies in $K$, the norm form on $K_w=K\otimes\IQ_v$ remains anisotropic. The local Weil representation of $\GO_T(\IQ_v)\times \GSp(4,\IQ_v)$ on the space of Schwartz-Bruhat functions $\mathcal{S}(K_w\times K_w\times\IQ_v^\times)$ does not admit non-zero paramodular invariant vectors; this can be shown by an elementary calculation. But then the global Weil representation does not admit non-zero paramodular invariants either. Since paramodular groups are compact, the functor of taking invariants is exact and therefore the paramodular invariant subspace of $\Pi=\theta(\sigma)$ is zero.
\end{proof}

\begin{theorem}[Multiplicity one] \label{thm:multiplicity_one} Suppose $\Pi=\Pi_\infty\otimes\Pi_{\mathrm{fin}}$ is a paramodular cuspidal automorphic irreducible representation of $\GSp(4,\IA)$ with $\Pi_\infty$ in the discrete series and such that $\Pi_v$ is spherical at the place $v=2$. 
Assume that Hypothesis \ref{conj:non-vanishing-L-value} holds. Then $\Pi$ occurs in the cuspidal spectrum with multiplicity one.
\end{theorem}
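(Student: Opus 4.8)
The plan is to split into cases according to whether $\Pi$ is CAP, a weak endoscopic lift, or neither, and in each case to reduce the multiplicity statement to known results by exploiting global genericity. First I would treat the CAP case. A paramodular $\Pi$ cannot be CAP associated to the Siegel parabolic, since such Saito--Kurokawa type representations have non-tempered, non-generic local components incompatible with paramodularity away from the ramified set; and by Lemma~\ref{lem:Soudry_lifts_not_paramodular} it cannot be CAP strongly associated to the Klingen parabolic. (CAP associated to the Borel does not arise for $\GSp(4)$ over $\IQ$ with $\Pi_\infty$ in the discrete series.) Hence a paramodular $\Pi$ with $\Pi_\infty$ in the discrete series is never CAP, and we may assume $\Pi$ is not CAP.

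Next, if $\Pi$ is a weak endoscopic lift, then by Proposition~\ref{prop:paramod_global_endo_is_generic} it is globally generic, and multiplicity one for globally generic cuspidal automorphic representations of $\GSp(4,\IA)$ is the theorem of Jiang and Soudry \cite{Jiang_Soudry_mult_one_GSp4}; this settles that case. So assume henceforth that $\Pi$ is neither CAP nor weak endoscopic. Then the hypotheses of Theorem~\ref{thm:stable_para_implies_glob_generic} are met: $\Pi$ is paramodular, unitary, cuspidal, not CAP, not weak endoscopic, $\Pi_\infty$ is in the discrete series, $\Pi_2$ is spherical, and Hypothesis~\ref{conj:non-vanishing-L-value} holds. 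Therefore $\Pi_v$ is locally generic at every non-archimedean place $v$.

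It remains to upgrade local genericity everywhere to multiplicity one. Here I would invoke Proposition~\ref{prop:mult_in_4D-reps} together with the consequence \eqref{eq:non-CAP-non-endo_param_is_glob_gen} of the proof of Theorem~\ref{thm:stable_para_implies_glob_generic}: writing $\Pi_\infty^W$ for the generic member of the archimedean $L$-packet $\{\Pi_\infty^+,\Pi_\infty^-\}$ of $\Pi_\infty$, the representation $\Pi'=\Pi_\infty^W\otimes\Pi_{\mathrm{fin}}$ is cuspidal automorphic and in fact $\Pi'\cong\Pi_{\mathrm{gen}}$ is globally generic, so it occurs with multiplicity one by Jiang--Soudry. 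By Proposition~\ref{prop:mult_in_4D-reps} the multiplicities of $\Pi_\infty^+\otimes\Pi_{\mathrm{fin}}$ and $\Pi_\infty^-\otimes\Pi_{\mathrm{fin}}$ in the cuspidal spectrum coincide; since one of these is $\Pi'$ with multiplicity one, both have multiplicity one, and in particular $\Pi=\Pi_\infty\otimes\Pi_{\mathrm{fin}}$ occurs with multiplicity one.

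The main obstacle I anticipate is the bookkeeping in the first paragraph: ruling out \emph{all} CAP possibilities for a paramodular $\Pi$ with discrete series archimedean component, in particular making sure the Saito--Kurokawa (Siegel-parabolic) case is genuinely excluded by paramodularity rather than merely by the hypothesis on $\Pi_2$ — this is where one must be careful that the non-generic local components of a Saito--Kurokawa representation are incompatible with the classification in Lemma~\ref{lem:para}, or else cite the relevant structural result. Everything after that is a clean assembly of Theorem~\ref{thm:stable_para_implies_glob_generic}, Proposition~\ref{prop:mult_in_4D-reps}, Proposition~\ref{prop:paramod_global_endo_is_generic}, and the Jiang--Soudry multiplicity one theorem.
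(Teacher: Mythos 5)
Your proof is correct in the non-CAP cases and essentially parallels the paper's argument there, but the first paragraph contains a genuine error: you claim that a paramodular $\Pi$ with $\Pi_\infty$ in the discrete series is never CAP, by asserting that Saito--Kurokawa type (Siegel-parabolic CAP) representations have non-generic local components ``incompatible with paramodularity.'' This is false. Lemma~\ref{lem:para} classifies non-generic paramodular local representations, and the list includes exactly the types (IIb, Vb,c, VIc, XIb, etc.) that occur as local components of Saito--Kurokawa lifts. Indeed, classically the Saito--Kurokawa lift of an elliptic cuspidal newform of level $N$ and weight $2k-2$ is a holomorphic paramodular Siegel cusp form of level $N$ and weight $k$, so paramodular CAP representations certainly exist. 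The point is that Lemma~\ref{RS-temps_are_param_iff_gen} (paramodular $\Leftrightarrow$ generic) applies only to \emph{tempered} local representations, whereas Saito--Kurokawa local components are non-tempered, so that equivalence is unavailable.

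Consequently you have no argument for the CAP case, which the paper treats separately: by Lemma~\ref{lem:Soudry_lifts_not_paramodular} the Klingen case is excluded, and in the remaining Borel/Siegel case $\Pi$ is a Saito--Kurokawa lift in the sense of Piatetskii-Shapiro, whose multiplicity-one property is cited from \cite{PS-SaitoK} and \cite[(5.10)]{Gan_SK}. Your supplementary claim that Borel-associated CAP ``does not arise'' with discrete series archimedean component is also unjustified; the paper does not make this claim and instead folds the Borel case into the Saito--Kurokawa analysis. To repair the argument, replace your first paragraph by: use Lemma~\ref{lem:Soudry_lifts_not_paramodular} to exclude the Klingen case, and then cite the multiplicity-one results for Saito--Kurokawa lifts directly. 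The rest of your argument (weak endoscopic via Prop.~\ref{prop:paramod_global_endo_is_generic} and Jiang--Soudry, which is a slight variant of the paper's direct citation of \cite[Thm.\,5.2]{Weissauer200903}; non-CAP non-endoscopic via Theorem~\ref{thm:stable_para_implies_glob_generic}, \eqref{eq:non-CAP-non-endo_param_is_glob_gen}, Prop.~\ref{prop:mult_in_4D-reps}, and Jiang--Soudry) is sound.
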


\begin{proof} 
If $\Pi$ is neither CAP nor weak endocopic, let $\Pi'=\Pi_\infty^W\otimes\Pi_{\mathrm{fin}}$ as above, then $\Pi'$ is globally generic by \eqref{eq:non-CAP-non-endo_param_is_glob_gen} and hence satisfies multiplicity one, see Jiang and Soudry \cite{Jiang_Soudry_mult_one_GSp4}. 
By Prop.~\ref{prop:mult_in_4D-reps} the multiplicity of $\Pi$ in the cuspidal spectrum is then also one.
If $\Pi$ is weak endoscopic, it occurs in the cuspidal spectrum with multiplicity one \cite[Thm.\,5.2]{Weissauer200903}.

Suppose $\Pi$ is CAP. By Lemma~\ref{lem:Soudry_lifts_not_paramodular} we can assume that $\Pi$ is strongly associated to the Borel or to the Siegel parabolic. Then it is a Saito-Kurokawa lift in the sense of Piatetskii-Shapiro, and thus occurs with multiplicity one \cite{PS-SaitoK}, \cite[(5.10)]{Gan_SK}.
\end{proof}

\begin{theorem}[Strong multiplicity one]\label{thm:strong_mult_one}
Suppose two paramodular automorphic cuspidal irreducible representations $\Pi_1$, $\Pi_2$ of $\GSp(4,\IA_\IQ)$ are locally isomorphic at almost every place. 
Assume that the archimedean local factors are either both in the generic discrete series or both in the holomorphic discrete series of $\GSp(4,\IR)$.
Assume that the local factors at the place $v=2$ are spherical and that Hypothesis \ref{conj:non-vanishing-L-value} holds. Then $\Pi_1$ and $\Pi_2$ are globally isomorphic.
\end{theorem}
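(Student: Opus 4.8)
The plan is to prove $\Pi_1\cong\Pi_2$ by a case distinction according to the type of the common weak equivalence class of $\Pi_1$ and $\Pi_2$, treating the non-archimedean and archimedean data separately in each case. Since being CAP, and being a weak endoscopic lift, are invariants of the weak equivalence class, $\Pi_1$ is CAP (resp.\ weak endoscopic) if and only if $\Pi_2$ is. I would therefore distinguish three cases: (1) $\Pi_1,\Pi_2$ neither CAP nor weak endoscopic; (2) $\Pi_1,\Pi_2$ weak endoscopic; (3) $\Pi_1,\Pi_2$ CAP.

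In case (1) I would first invoke Theorem~\ref{thm:stable_para_implies_glob_generic} — its hypotheses hold because each $\Pi_{i,\infty}$ is in the discrete series, the factors at $v=2$ are spherical, and Hypothesis~\ref{conj:non-vanishing-L-value} is assumed — to conclude that each $\Pi_i$ is locally generic at every non-archimedean place. Replacing the archimedean factor by the generic member $\Pi_{i,\infty}^W$ of its $L$-packet, Proposition~\ref{prop:mult_in_4D-reps} shows that $\Pi_i^W:=\Pi_{i,\infty}^W\otimes\Pi_{i,\mathrm{fin}}$ is again cuspidal automorphic, and since it is locally generic everywhere, \eqref{eq:non-CAP-non-endo_param_is_glob_gen} shows it is globally generic. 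Because $\Pi_1^W$ and $\Pi_2^W$ are locally isomorphic at almost every finite place, the strong multiplicity one theorem for globally generic cuspidal representations of $\GSp(4,\IA)$ of Jiang and Soudry gives $\Pi_1^W\cong\Pi_2^W$; in particular $\Pi_{1,\mathrm{fin}}\cong\Pi_{2,\mathrm{fin}}$ and $\Pi_{1,\infty}^W\cong\Pi_{2,\infty}^W$, so $\Pi_{1,\infty}$ and $\Pi_{2,\infty}$ lie in the same discrete series $L$-packet of $\GSp(4,\IR)$. Such a packet contains exactly one generic and one holomorphic member, so the archimedean hypothesis forces $\Pi_{1,\infty}\cong\Pi_{2,\infty}$, and hence $\Pi_1\cong\Pi_2$.

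In case (2), Proposition~\ref{prop:paramod_global_endo_is_generic} shows that $\Pi_1$ and $\Pi_2$ are themselves globally generic (in particular their archimedean factors are automatically in the generic discrete series), so the Jiang--Soudry strong multiplicity one theorem applies directly. In case (3), Lemma~\ref{lem:Soudry_lifts_not_paramodular} rules out strong association with the Klingen parabolic, so each $\Pi_i$ is strongly associated with the Borel or the Siegel parabolic, i.e.\ is a Saito--Kurokawa representation. Dividing the partial spin $L$-function $L_{\mathrm{PSS}}^S(\Pi_i,s)$ by the two shifted Hecke $L$-factors of an idele class character that it contains recovers the partial $L$-function of an elliptic cuspidal representation $\pi_i$ of $\GL(2,\IA)$; from $L_{\mathrm{PSS}}^S(\Pi_1,s)=L_{\mathrm{PSS}}^S(\Pi_2,s)$ and strong multiplicity one for $\GL(2)$ one obtains $\pi_1\cong\pi_2$, so $\Pi_1$ and $\Pi_2$ lie in the same global Saito--Kurokawa packet. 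I would then argue that the paramodular condition pins down a single member: at each non-archimedean place the explicit list of Lemma~\ref{lem:para} together with the Roberts--Schmidt tables shows that at most one member of the local Saito--Kurokawa packet is paramodular, so $\Pi_{1,v}\cong\Pi_{2,v}$ at every finite place, while at $\infty$ both factors are discrete series of the prescribed type, of which the archimedean Saito--Kurokawa packet contains only one; hence $\Pi_1\cong\Pi_2$.

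The hard part will be case (3): CAP (and in particular Saito--Kurokawa) representations are exactly where strong multiplicity one can fail, so the argument cannot rely on a globally generic avatar and must use paramodularity in an essential way. The clean assertion ``at most one member of each local Saito--Kurokawa $A$-packet is paramodular'' has to be verified place by place against the non-generic paramodular classification, and I expect this bookkeeping — rather than any new analytic ingredient — to be the main obstacle; if it becomes awkward, one may instead invoke the explicit description of paramodular Saito--Kurokawa forms directly. Cases (1) and (2), by contrast, reduce cleanly to the established strong multiplicity one theorem for globally generic representations of $\GSp(4)$, once Theorem~\ref{thm:stable_para_implies_glob_generic} has supplied non-archimedean genericity and the generic member of the archimedean packet has fixed the component at infinity.
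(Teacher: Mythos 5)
Your case decomposition (CAP vs.\ weak endoscopic vs.\ neither) matches the paper's, except the paper folds weak endoscopic in with the non-CAP case (via Proposition~\ref{prop:paramod_global_endo_is_generic}), and in both non-CAP subcases you correctly reduce to Soudry's strong multiplicity one theorem for globally generic representations of $\GSp(4)$, then recover the archimedean factor from the packet dichotomy, just as the paper does implicitly. The one place your proposal deviates is the CAP case, and there the paper has a shortcut you flagged as the ``hard part'' but did not see: rather than verifying place by place from the Roberts--Schmidt tables that at most one member of the local Saito--Kurokawa packet is paramodular, the paper observes that the local factors are non-generic (since they are CAP) and paramodular, so by Lemma~\ref{RS-temps_are_param_iff_gen} they must be \emph{non-tempered}, and a local Saito--Kurokawa Arthur packet has a \emph{unique} non-tempered constituent. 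This pins down the non-archimedean factors in one line, with the ``same packet'' assertion coming directly from weak equivalence (your detour through $L$-functions and strong multiplicity one for $\GL(2)$ reaches the same point, just less directly). At infinity the argument is the same: the discrete series constituent of the archimedean packet is unique. So your proposal is correct and your anticipated obstacle in the CAP case dissolves once one uses the tempered-implies-generic dichotomy of Lemma~\ref{RS-temps_are_param_iff_gen} rather than table bookkeeping.
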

\begin{proof}
Suppose $\Pi_1$ and $\Pi_2$ are not CAP. After possibly replacing the archimedean factor by the generic constituent in its local $L$-packet, we can assume that both $\Pi_1$ and $\Pi_2$ are globally generic by \eqref{eq:non-CAP-non-endo_param_is_glob_gen} and Proposition~\ref{prop:paramod_global_endo_is_generic}. 
Strong multiplicity one for globally generic representations has been shown by Soudry \cite[Thm.~1.5]{Soudry_Strong_Mult_One}.

If $\Pi_1$ and $\Pi_2$ are both CAP, they are strongly associated to the Borel or Siegel parabolic by Lemma~\ref{lem:Soudry_lifts_not_paramodular} and both occur via Saito-Kurokawa lifts in the sense of Piatetskii-Shapiro \cite{PS-SaitoK}.
For every local place $v$, the local factors $\Pi_{1,v}$ and $\Pi_{2,v}$ are non-generic and belong to the same Arthur packet. At the non-archimedean places, they are non-tempered by Lemma~\ref{RS-temps_are_param_iff_gen} and therefore isomorphic to the unique non-tempered constituent in the packet.
By assumption, the archimedean factors are in the discrete series, so they are isomorphic to the unique discrete series constituent of the archimedean Arthur packet.
\end{proof}

\appendix
\begin{footnotesize}
\bibliographystyle{amsalpha}

\end{footnotesize}

\vskip 20 pt
\begin{footnotesize}
\centering{Mirko R\"osner\\ Mathematisches Institut, Universit\"at Heidelberg\\ Im Neuenheimer Feld 205, 69120 Heidelberg\\ email: mroesner@mathi.uni-heidelberg.de}

\vskip 10 pt
\centering{Rainer Weissauer\\ Mathematisches Institut, Universit\"at Heidelberg\\ Im Neuenheimer Feld 205, 69120 Heidelberg\\ email: weissauer@mathi.uni-heidelberg.de}

\end{footnotesize}

\end{document}